\definecolor{black}{rgb}{0.0, 0.0, 0.0}
\definecolor{red}{rgb}{1.0, 0.5, 0.5}
\title[   ]{Global smooth solutions for 1D barotropic Navier-Stokes equations with a large class of degenerate viscosities}
\author[Kang]{Moon-Jin Kang}
\address[Moon-Jin Kang]{\newline Department of Mathematic \& Research Institute of Natural Sciences, \newline Sookmyung Women's University, Seoul 140-742, Korea}
\email{moonjinkang@sookmyung.ac.kr}
\author[Vasseur]{Alexis F. Vasseur}
\address[Alexis F. Vasseur]{\newline Department of Mathematics, \newline The University of Texas at Austin, Austin, TX 78712, USA}
\email{vasseur@math.utexas.edu}
\newtheorem{theorem}{Theorem}[section]
\newtheorem{lemma}{Lemma}[section]
\newtheorem{proposition}{Proposition}[section]
\newtheorem{remark}{Remark}[section]
\newcommand{\bbr}{\mathbb R}
\numberwithin{figure}{section}
\newcommand{\beq}{\begin{equation}}
\newcommand{\eeq}{\end{equation}}
\newcommand{\bsp}{\begin{split}}
\newcommand{\esp}{\end{split}}
\def\eps{\varepsilon }
\newcommand\adots{\mathinner{\mkern2mu\raise1pt\hbox{.}
\mkern3mu\raise4pt\hbox{.}\mkern1mu\raise7pt\hbox{.}}}
\def\charf {\mbox{{\text 1}\kern-.30em {\text l}}}
\newcommand{\rhoe}{\rho_\eps}
\newcommand{\ue}{u_\eps}
\newcommand{\we}{w_\eps}
\newcommand{\mue}{\mu_\eps}
\newcommand{\px}{\partial_x}
\begin{document}
\bibliographystyle{plain}

\date{\today}

\subjclass[2010]{35Q35, 76N10} 
\keywords{Existence, Uniqueness, Smooth solution, 1D barotropic Navier-Stokes system, Degenerate viscosity}

\thanks{\textbf{Acknowledgment.}  The first author was partially supported by the NRF-2019R1C1C1009355.
The second author was partially supported by the NSF grant: DMS 1614918. 
}

\begin{abstract}
We prove the global existence and uniqueness of smooth solutions to the one-dimensional barotropic Navier-Stokes system with degenerate viscosity $\mu(\rho)=\rho^\alpha$.
We establish that the smooth solutions have possibly two different far-fields, and the initial density remains positive globally in time, for the initial data satisfying the same conditions. 
In addition, our result works for any $\alpha>0$, i.e., for a large class of degenerate viscosities.
In particular, our models include the viscous shallow water equations.
This extends the result of Constantin-Drivas-Nguyen-Pasqualotto \cite[Theorem 1.6]{CDNP} (on the case of periodic domain) to the case where smooth solutions connect possibly two different limits at the infinity on the whole space. 
  \end{abstract}
\maketitle \centerline{\date}

\tableofcontents

\section{Introduction}
\setcounter{equation}{0}

We consider the one-dimensional barotropic Navier-Stokes system in the Eulerian coordinates:
\begin{align}
\begin{aligned}\label{NS}
\left\{ \begin{array}{ll}
        \rho_t + (\rho u)_x =0,\\
       (\rho u)_t+(\rho u^2)_x + p(\rho)_x = (\mu(\rho) u_x)_x, \end{array} \right.
\end{aligned}
\end{align}
where the pressure $p(\rho)$ follows the case of a polytropic perfect gas, i.e.,
\beq\label{pressure}
p(\rho)= \rho^{\gamma},\quad \gamma> 1,
\eeq
with  $\gamma$ the adiabatic constant. Here, $\mu$ denotes the viscosity coefficient given by 
\beq\label{mu-def}
\mu(\rho) = \rho^{\alpha}. 
\eeq
Notice that if $\alpha>0$, $\mu(\rho)$ degenerates near the vacuum, i.e., near $\rho=0$.
Very often, the viscosity coefficient is assumed to be constant, i.e., $\alpha=0$. However, in the  physical context  the viscosity of a gas depends on the temperature (see Chapman and Cowling \cite{CC}).  In the barotropic case,  the viscosity depends directly on the density. 
In general, the viscosity  is expected to degenerate on the vacuum as a power of the density as in \eqref{mu-def}.\\

There are many results on the existence of solutions to the compressible Navier-Stokes equations with the constant viscosity for the one-dimensional case.
The existence of weak solutions was first established by Kazhikhov and Shelukhin \cite{KS} for smooth enough initial data close to the equilibrium bounded away from zero. The case of discontinuous data but still bounded away from zero was addressed by Shelukhin \cite{Shel82,Shel83,Shel84} and then by Serre \cite{Serre86} and Hoff \cite{Hoff87}. First result for vanishing initial density was obtained by Shelukhin \cite{Shel86}. Hoff \cite{Hoff98} proved the existence of global weak solutions with large discontinuous initial data, possibly having different limits at the infinity. There, he also proved that the vacuum cannot form in finite time.
The issues on regularity and uniqueness of solutions was first studied by Solonnikov \cite{Solo} for smooth initial data and for small time. However, the regularity may blow-up as the solution gets close to vacuum. Hoff and Smoller \cite{HoSm} show that any weak solution of the one-dimensional Navier-Stokes equations do not have vacuum states for every time, provided that no vacuum states initially exist.
 
Concerning the 1D existence theory for the degenerate case \eqref{NS}, Mellet-Vasseur \cite{MV_sima} proved the global existence and uniqueness of strong solutions  
with large initial data having possibly different limits at the infinity without no vacuum states in the case of $\alpha<1/2$ and $\gamma>1$. 
To control the $L^\infty$-norm of $1/\rho$ globally in time, they used the relative entropy inequality based on the Bresch-Desjardins entropy, which was derived in \cite{BD_Paris02} for the multi-dimensional Korteweg system of equations (for the case of $\alpha=1$ and with an additional capillary term) and later generalized in \cite{BD_Paris04}. In the one-dimensional case, a similar inequality was introduced earlier by Vaigant \cite{Vai} for flows with constant viscosity.

The result of Mellet-Vasseur \cite{MV_sima} was extended by Haspot \cite{Haspot} to the case of $\alpha\in (1/2,1]$. Recently, Constantin-Drivas-Nguyen-Pasqualotto \cite[Theorem 1.6]{CDNP} extended it to the case of $\alpha\ge 0$ and $\gamma\in [\alpha, \alpha+1]$ with $\gamma>1$, but they dealt with it on the periodic domain, and with an additional technical condition (see \eqref{mono-w0}). \\

In this article, we aim to extend the result \cite[Theorem 1.6]{CDNP} to the case where smooth solutions have possibly different limits at the infinity on the whole space.  
This extended result is motivated by the recent works \cite{Kang-V-NS17,KV-unique19} of the authors on the contraction property for any large perturbations of viscous shocks of the one-dimensional barotropic Navier-Stokes system with degenerate viscosity. 

\subsection{Main results}
We study global existence of smooth solutions to \eqref{NS} with initial data having possibly two different limits $(\rho_\pm, u_\pm)$ at $x=\pm \infty$, where $\rho_\pm>0$.
For that, we let $\bar \rho$ and $\bar u$ be smooth monotone functions such that
\beq\label{smooth-end}
\bar\rho (x)=\rho_\pm >0 \quad\mbox{and}\quad \bar u(x)=u_\pm,\quad \mbox{when } \pm x\ge 1.
\eeq

\begin{theorem}\label{thm:main}
Assume $\gamma>1, \alpha>0, $ and $\gamma\in [\alpha,\alpha+1]$. Let $\rho_0$ and $u_0$ be the initial data  such that 
\begin{align}
\begin{aligned} \label{ass-ini}
& \rho_0 -\bar \rho \in H^k(\bbr),\qquad u_0 -\bar u \in H^k(\bbr),\qquad \mbox{for some integer } k\ge 4,\\
&0<\underline \kappa_0 \le \rho_0(x) \le \overline \kappa_0,\quad \forall x\in\bbr,\qquad \mbox{for some constants $\underline \kappa_0, \overline \kappa_0$},
\end{aligned}
\end{align}
and
\beq\label{mono-w0}
\partial_x u_0(x)\le \rho_0(x)^{\gamma-\alpha},\qquad \forall x\in\bbr,
\eeq 
where $\bar\rho$ and $\bar u$ are the smooth monotone functions satisfying \eqref{smooth-end}.\\ 
Then there exists a global-in-time unique smooth solution $(\rho, u)$ of \eqref{NS}-\eqref{mu-def} such that 
for any $T>0$,
\begin{align*}
\begin{aligned}
&\rho_\eps -\bar \rho \in L^\infty(0,T;H^k(\bbr)) \\
& u_\eps-\bar u \in L^\infty(0,T;H^k(\bbr)) \cap  L^2(0,T;H^{k+1}(\bbr)).
\end{aligned}
\end{align*}
Moreover, there exists constants $\underline \kappa(T)$ and $\overline \kappa(T)$ such that
\[
\underline  \kappa(T) \le \rho(t,x) \le \overline  \kappa(T),\qquad \forall (t,x)\in [0,T]\times \bbr.
\]
\end{theorem}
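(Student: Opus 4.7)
I would reduce the theorem to uniform a priori estimates coupled with a standard local existence and continuation scheme. Writing $\tilde\rho:=\rho-\bar\rho$ and $\tilde u:=u-\bar u$, local well-posedness in the class $\tilde\rho\in C([0,T^*);H^k(\bbr))$, $\tilde u\in C([0,T^*);H^k)\cap L^2(0,T^*;H^{k+1})$ follows from a standard iteration on the quasilinear parabolic equation for $u$ coupled to the transport equation for $\rho$, as long as the initial density is strictly positive. The continuation criterion is that $\rho$ remains between two positive constants on $[0,T]$ and that the $H^k$ norms of $(\tilde\rho,\tilde u)$ stay finite. Thus the whole theorem reduces to showing that, on each finite interval $[0,T]$, one can obtain positive lower and upper bounds on $\rho$ and close the $H^k$-level energy estimates.

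\textbf{The key propagation.} The heart of the argument is the effective viscous flux
\[
F := \rho^{\alpha}\,\partial_x u - \rho^{\gamma},
\]
for which \eqref{mono-w0} reads $F|_{t=0}\le 0$, and which satisfies $F(t,\pm\infty)=-\rho_{\pm}^{\gamma}<0$. The momentum equation rewrites as $\partial_x F = \rho(\partial_t u + u\,\partial_x u)$, and combining this with the continuity equation a direct computation yields
\[
\partial_t F + u\,\partial_x F = \rho^{\alpha-1}\partial_x^{2}F - \rho^{\alpha-2}\partial_x\rho\,\partial_x F - (\alpha+1)\rho^{-\alpha}F^{2} + (\gamma-2\alpha-2)\rho^{\gamma-\alpha}F + (\gamma-\alpha-1)\rho^{2\gamma-\alpha}.
\]
Under $\gamma\le \alpha+1$ the forcing term $(\gamma-\alpha-1)\rho^{2\gamma-\alpha}$ is nonpositive, and (since $\alpha>0$) the coefficient $\gamma-2\alpha-2$ is strictly negative. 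A parabolic maximum principle on $\bbr$ — the supremum of $F(t,\cdot)$ is attained because $F\to -\rho_\pm^{\gamma}<0$ at infinity — therefore propagates $F\le 0$, which is exactly
\[
\partial_x u(t,x) \le \rho(t,x)^{\gamma-\alpha} \qquad \forall\,(t,x)\in [0,T^*)\times\bbr.
\]

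\textbf{Bounds on $\rho$.} Equipped with this one-sided bound, the continuity equation written along the characteristics $\dot X=u$ gives $\dot\rho\ge -\rho^{\gamma-\alpha+1}$; since $\gamma\ge\alpha$, comparison with the scalar ODE $\dot y=-y^{\gamma-\alpha+1}$ starting from $\underline\kappa_0$ keeps the solution strictly positive for all time, producing the lower bound $\rho(t,x)\ge\underline\kappa(T)>0$ on $[0,T]\times\bbr$. For the upper bound I would use the Bresch--Desjardins entropy adapted to the whole line: introducing $\varphi(\rho)$ with $\rho\,\varphi'(\rho)=\mu'(\rho)$ (so $\varphi=\tfrac{\alpha}{\alpha-1}\rho^{\alpha-1}$, with $\ln\rho$ when $\alpha=1$) and the effective velocity $w:=u+\partial_x\varphi(\rho)$, the standard BD computation together with a relative entropy against the far-field profile $(\bar\rho,\bar u)$ controls $\int\rho|w-\bar w|^2\,dx$ plus a dissipation of the form $\int \rho^{\gamma+\alpha-3}(\partial_x\rho)^2\,dx$. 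The range $\gamma\in[\alpha,\alpha+1]$ is precisely what makes these integrations by parts close, and a standard Gagliardo--Nirenberg argument then converts this control into the upper bound $\rho\le\overline\kappa(T)$.

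\textbf{Higher regularity, uniqueness, and the main obstacle.} Once $0<\underline\kappa(T)\le\rho\le\overline\kappa(T)$ is in hand, \eqref{NS} is uniformly parabolic in $u$ and transport in $\rho$, and Moser-type estimates applied to $\partial_x^{j}$ of the equations for $j\le k$, acting on the perturbation $(\tilde\rho,\tilde u)$, close the $H^k$ energy estimates; the source terms generated by $(\bar\rho,\bar u)$, which do not solve \eqref{NS}, are compactly supported by \eqref{smooth-end} and are harmless. Uniqueness is a classical $L^2$ difference estimate for two smooth solutions. The main obstacle I expect is precisely the propagation $F\le 0$: one has to justify the maximum principle globally on $\bbr$ for a function that is only known to be as regular as the local solution, handle the coefficient $\rho^{\alpha-2}$ which is singular near vacuum when $\alpha<2$ (so that a preliminary lower bound on $\rho$ must be bootstrapped into the argument), and, because the lower bound on $\rho$ itself is deduced from $F\le 0$, close both estimates simultaneously on a short time interval before iterating up to arbitrary $T<\infty$.
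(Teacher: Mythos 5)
Your proposal is correct in substance and hits the same key mechanism as the paper — propagating a one-sided bound on the active potential/effective viscous flux to get a positive lower bound on $\rho$ — but it takes a genuinely different constructive route. The paper does \emph{not} work with the original degenerate viscosity $\mu=\rho^\alpha$ directly: instead it replaces $\mu$ by $\mu_\eps(\rho)=\max(\rho^\alpha,\eps\rho^{\alpha_*})$ with $\alpha_*=\tfrac12\min(\alpha,\tfrac12)$, so that the perturbed viscosity falls into the range of the Mellet--Vasseur global existence theorem (Proposition \ref{prop:mv}, which requires a sub-$\rho^{1/2}$ lower bound on $\nu$). This produces a globally defined, already smooth, vacuum-free approximate solution $(\rho_\eps,u_\eps)$ for free. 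The price is that the initial sign condition $F|_{t=0}\le 0$ is not exactly preserved by $\mu_\eps$ (where $\rho_0^\alpha\le\eps\rho_0^{\alpha_*}$ one only gets $w_\eps(0)\le\eps^\theta$), so the paper's Lemma \ref{lem:up-w} propagates $w_\eps\le C_\gamma\eps^\theta$ rather than $w_\eps\le0$, and the resulting ODE for the minimum of $\rho_\eps$ in Proposition \ref{lem:rho2} carries an $\eps^\theta$ error that must be controlled. Once the lower (and, via Bresch--Desjardins, upper) density bounds are shown uniform in $\eps$, the paper observes that for $\eps<\delta_T$ one has $\mu_\eps(\rho_\eps)\equiv\mu(\rho_\eps)$, so the approximate solution is actually an exact one.

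Your route — local $H^k$ well-posedness for the original degenerate system, direct propagation of $F\le 0$ by the parabolic maximum principle, lower bound on $\rho$ from the resulting ODE, BD entropy for the upper bound, Moser estimates, continuation — arrives at the same a priori bounds, and the $F$-equation you write is exactly equation \eqref{w-eq} specialized to $\mu=\rho^\alpha$; the sign structure ($\gamma-\alpha-1\le 0$ and $\gamma-2\alpha-2<0$) is checked correctly. The trade-off is that you must supply a local existence theorem for $\mu=\rho^\alpha$ on the whole line with $(\tilde\rho,\tilde u)\in H^k$, which the paper deliberately sidesteps by invoking MV for $\mu_\eps$. That local step is plausible (the initial density is bounded below, so the equation is locally uniformly parabolic), but it is not an off-the-shelf citation in this setting, and it is precisely where the bootstrap issue you flag at the end (positivity of $\rho$ needed to run the $F$ maximum principle, but deduced from it) has to be resolved. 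The paper's regularization dissolves this chicken-and-egg problem cleanly at the cost of tracking $\eps^\theta$ errors; your direct approach gives the cleaner statement $F\le 0$ but concentrates the technical difficulty in the local existence and continuation argument. One small additional remark: the paper's BD entropy uses $\varphi'(\rho)=\mu(\rho)/\rho^2$ rather than $\rho\varphi'(\rho)=\mu'(\rho)$; these differ by a constant factor for power-law viscosities and your version is equivalent, but it is worth matching the normalization when carrying out the estimate.
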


\begin{remark}
Note that the system \eqref{NS} is equivalent to the one in the mass Lagrangian coordinates for the regularity in Theorem \ref{thm:main}. Therefore, the above result provides a class of global-in-time solutions smooth enough, in which the authors proved the contraction property \cite{Kang-V-NS17,KV-unique19} for viscous shocks of the barotropic Navier-Stokes system in the mass Lagrangian coordinates, with any large initial data satisfying \eqref{ass-ini} and \eqref{mono-w0}.
\end{remark}

\begin{remark}
Note from the assumption on $\alpha$ and $\gamma$ that Theorem \ref{thm:main} also holds for the viscous shallow water equations (i.e., $\gamma=2$, $\alpha=1$). We refer to Gerbeau-Perthame \cite{GP} for a derivation of the viscous shallow water equations from the incompressible Navier-Stokes equations with free boundary.
\end{remark}

\begin{remark}
The initial assumptions on \eqref{mono-w0} and $k\ge 4$ in \eqref{ass-ini} are the same conditions as in  \cite[Theorem 1.5]{CDNP}, which is used to control the active potential \eqref{def-w} defined by the density and the velocity (see Lemma \ref{lem:up-w}).
\end{remark}

\section{Proof of Theorem \ref{thm:main}}
\setcounter{equation}{0}

\subsection{Idea of Proof}
Since we are looking for solutions converging to possibly two different limits $(\rho_\pm, u_\pm)$ at $x=\pm \infty$, we do not expect that solutions are integrable. 
Thus, as a starting point, we may take advantage of the existence result \cite{MV_sima}, for solutions $(\rho, u)$ to satisfy $\rho-\bar\rho, u-\bar u\in L^\infty(0,T;L^2(\bbr))$.
However, since the result \cite{MV_sima} require the assumption $\alpha<1/2$ while we consider any $\alpha>0$, we may perturb the viscosity coefficient \eqref{mu-def} by adding $\eps\rho^{1/4}$ with small parameter $\eps$ as in \eqref{new-v}, under which we ensure the global existence of strong solution $(\rhoe,\ue)$ satisfying the $H^1$-spatial regularity and the positive lower-bound of the density (see \eqref{eps-reg} and \eqref{eps-bdd}).\\
To remove the $\eps$-dependence of the approximate viscosity $\mue$ as in \eqref{ind-mu}, we may first show that the lower bound of the density $\rhoe$ is independent of $\eps$ as in Proposition \ref{lem:rho2}. For that, we basically use the idea in \cite{CDNP} on the analysis for the time-evolution of the active potential (see Lemma \ref{lem:up-w}). To perform the analysis, we need at least $H^4$-spatial regularity of $(\rhoe,\ue)$, which requires the initial condition \eqref{ass-ini}.\\

\subsection{Approximate viscosity}
As mentioned above, we first recall the existence result in \cite{MV_sima} as follows:

\begin{proposition}\cite{MV_sima} \label{prop:mv}
Let $\rho_0$ and $u_0$ be the initial data such that
\beq\label{prop:ini}
0<\underline \kappa_0 \le \rho_0(x) \le \overline \kappa_0, \quad \rho_0 -\bar \rho \in H^1(\bbr),\quad u_0 -\bar u \in H^1(\bbr),
\eeq
for some constants $\underline \kappa_0, \overline \kappa_0$. 
Let $\nu:\bbr_+\to\bbr_+$ be a function such that for some constants $C>0$ and $q\in [0,1/2)$,
\begin{align}
\begin{aligned} \label{oldv-1}
\nu(y)\ge \left\{ \begin{array}{ll}
       C y^q \qquad &\forall y\le 1\\
       C \qquad &\forall y\ge 1, \end{array} \right.
\end{aligned}
\end{align}
and 
\beq\label{oldv-2}
\nu(y)\le C + C y^\gamma \qquad \forall y\ge 0.
\eeq
Then there exists a global-in-time unique strong solution $(\rho, u)$ of \eqref{NS}-\eqref{pressure} with $\mu=\nu$ such that the following holds:\\
For any $T>0$, there exist positive constants $\underline \beta(T)$ and $\overline \beta(T)$ such that 
\begin{align*}
\begin{aligned} 
&\rho -\bar \rho \in L^\infty(0,T;H^1(\bbr)),\\
&u-\bar u\in L^\infty(0,T;H^1(\bbr))\cap L^2(0,T;H^2(\bbr)),\\
&\underline \beta(T) \le \rho(t,x) \le \overline \beta(T),\qquad \forall (t,x)\in [0,T]\times \bbr.
\end{aligned}
\end{align*}
\end{proposition}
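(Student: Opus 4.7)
The plan is to follow the strategy of Mellet--Vasseur \cite{MV_sima}: regularize the degenerate viscosity to place the problem within the reach of classical existence theory, derive a priori estimates that are uniform with respect to the regularization parameter, and then pass to the limit. Concretely, I would replace $\nu$ by $\nu_\delta(\rho):=\nu(\rho)+\delta$ for $\delta>0$, which is bounded below away from zero. For this non-degenerate system, classical theory (Kazhikhov--Shelukhin, Solonnikov) supplies a unique local-in-time strong solution $(\rho^\delta,u^\delta)$, and Hoff-type continuation extends it globally on any $[0,T]$ provided uniform two-sided bounds on the density are available. The two main difficulties to overcome are that solutions are not integrable---they connect distinct far-field states $(\rho_\pm,u_\pm)$---and that $\nu$ degenerates at the vacuum.

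The heart of the argument consists of a pair of entropy estimates, uniform in $\delta$. Testing the momentum equation against $u-\bar u$ produces the relative energy inequality, which controls $\rho-\bar\rho$ and $\sqrt{\rho}\,(u-\bar u)$ in $L^\infty_t L^2_x$ as well as $\sqrt{\nu_\delta(\rho)}\,u_x$ in $L^2_{t,x}$. The Bresch--Desjardins (BD) entropy, adapted to the non-integrable setting by working with $u-\bar u+(\nu_\delta(\rho))_x/\rho$ in place of the bare effective velocity, provides additional control on this modified effective velocity in $L^\infty_t L^2_x$ and on $\sqrt{p'(\rho)\nu_\delta'(\rho)/\rho}\,\rho_x$ in $L^2_{t,x}$. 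Combining the two estimates and exploiting the bounds \eqref{oldv-1}--\eqref{oldv-2}, one obtains a uniform $L^\infty_{t,x}$ bound on an auxiliary function $\Phi(\rho)$ that blows up at both $\rho=0$ and $\rho=+\infty$; this yields the two-sided bounds $\underline\beta(T)\le\rho^\delta\le\overline\beta(T)$ independently of $\delta$.

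With $\rho^\delta$ pinched away from $0$ and $\infty$, the equation for $u^\delta$ becomes uniformly parabolic, and standard higher-order energy estimates deliver $u^\delta-\bar u\in L^\infty_t H^1_x\cap L^2_t H^2_x$ and, via the continuity equation, $\rho^\delta-\bar\rho\in L^\infty_t H^1_x$, again uniformly in $\delta$. An Aubin--Lions compactness argument then allows passage to the limit $\delta\to 0$ in all nonlinear terms; the uniform positive lower bound on $\rho^\delta$ is essential so that $\nu_\delta(\rho^\delta)\to\nu(\rho)$ in a topology strong enough to identify the viscous flux. Uniqueness follows from a standard $L^2$ energy argument on the difference of two solutions, using the pointwise bounds on $\rho$. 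The principal obstacle is producing the uniform two-sided density bounds in the non-integrable setting: the restriction $q<1/2$ in \eqref{oldv-1} enters precisely here, since the BD-type dissipation yields $L^\infty$ control of a power like $\rho^{1/2-q}$ that keeps $\rho$ bounded away from zero only in that regime.
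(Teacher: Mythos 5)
Your sketch is a faithful summary of the Mellet--Vasseur argument that the paper simply cites here without reproducing: regularize the viscosity to restore uniform parabolicity, derive the basic energy estimate together with the Bresch--Desjardins entropy adapted to non-integrable data (the paper's version \eqref{2est-bd} uses the modified velocity $u-\bar u+\partial_x\varphi(\rho)$ with $\varphi'(\rho)=\mu(\rho)/\rho^2$, a slightly different normalization from the $(\nu(\rho))_x/\rho$ you wrote, but the same device), extract two-sided $\delta$-uniform density bounds using $q<1/2$, obtain $H^1/H^2$ bounds once the density is pinched, and pass to the limit by compactness with a standard $L^2$ uniqueness argument. This matches the approach the paper relies on for Proposition \ref{prop:mv}.
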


\vspace{1cm}

To use Proposition \ref{prop:mv}, we consider an approximate viscosity coefficient $\mu_\eps$ defined by perturbing the viscosity $\mu$ in \eqref{mu-def} as follows: For any $0<\eps<1$,
\beq\label{new-v}
\mu_\eps (\rho) :=\max\left(\mu(\rho), \eps \rho^{\alpha_*}\right),\quad \forall \rho\ge0,\quad\mbox{where~} \alpha_*:=\frac{1}{2}\min\left(\alpha,\frac{1}{2} \right).
\eeq
Since  
\[
\mu_\eps (\rho) \ge \left\{ \begin{array}{ll}
       \eps \rho^{1/4} \qquad &\forall \rho\le 1\\
       \eps \qquad &\forall \rho\ge 1, \end{array} \right.
\]
and it follows from $\gamma\ge\alpha$ that 
\beq\label{up-mue}
\mu_\eps (\rho) \le 1+ \rho^\gamma\qquad \forall \rho\ge0,
\eeq
$\mu_\eps$ satisfies the assumptions \eqref{oldv-1} and \eqref{oldv-2}. Therefore, for the initial datum $(\rho_0, u_0)$ satisfying \eqref{ass-ini}, Proposition \ref{prop:mv} implies that there exists a global-in-time unique strong solution $(\rho_\eps, u_\eps)$ of \eqref{NS}-\eqref{pressure} with $\mu=\mu_\eps$, i.e., 
\begin{align}
\begin{aligned}\label{NS-eps}
\left\{ \begin{array}{ll}
        \partial_t \rhoe + \partial_x(\rhoe \ue) =0\\
        \partial_t (\rhoe \ue)+ \partial_x (\rhoe \ue^2) +  \partial_x p(\rhoe) =  \partial_x (\mue(\rhoe)  \partial_x \ue)\\
        (\rhoe, \ue) |_{t=0} =(\rho_0, u_0), \end{array} \right.
\end{aligned}
\end{align}
such that the following holds: For any $T>0$, there exist positive constants $\underline \kappa_\eps(T)$,  $\overline \kappa_\eps(T)$ and $C=C(T,\eps, \underline\kappa_0,\overline \kappa_0)$ such that 
\begin{align}
\begin{aligned}\label{eps-reg} 
\|\rho_\eps -\bar \rho \|_{L^\infty(0,T;H^1(\bbr))} + \| u_\eps-\bar u \|_{L^\infty(0,T;H^1(\bbr))} +  \| u_\eps-\bar u \|_{L^2(0,T;H^2(\bbr))} \le C,
\end{aligned}
\end{align}
and
\beq\label{eps-bdd}
\underline \kappa_\eps(T) \le \rho_\eps(t,x) \le \overline \kappa_\eps(T),\qquad \forall (t,x)\in (0,T)\times \bbr.
\eeq

\subsection{Higher Sobolev regularity}
For the system \eqref{NS-eps}, we consider the active potential 
\beq\label{def-w}
w_\eps := -p(\rho_\eps) + \mu_\eps(\rho_\eps) \partial_x u_\eps.
\eeq
This is the potential in the momentum equation of \eqref{NS-eps}. Indeed, its gradient is the force:
\[
\rhoe ( \partial_t\ue +\ue\px\ue)  = \px \we.
\]  
Then it follows from \cite[Proposition 3.1]{CDNP} that $w_\eps$ satisfies a forced quadratic heat equation with linear drift:
\begin{align}
\begin{aligned} \label{w-eq}
\partial_t \we &= \frac {\mue(\rhoe)}{\rhoe} \px^2 \we -\left(\ue+\mue(\rhoe)\frac{\px\rhoe}{\rhoe^2}\right) \px\we +\left( \rhoe\frac{p'(\rhoe)}{\mue(\rhoe)} -2p(\rhoe) \frac{\rhoe\mue'(\rhoe)+\mue(\rhoe)}{\mue(\rhoe)^2}  \right)\we \\
&\quad - \frac{\rhoe\mue'(\rhoe)+\mue(\rhoe)}{\mue(\rhoe)^2} \we^2 + \left( \rhoe\frac{p'(\rhoe)}{\mue(\rhoe)} -p(\rhoe)\frac{\rhoe\mue'(\rhoe)+\mue(\rhoe)}{\mue(\rhoe)^2}  \right) p(\rhoe).
\end{aligned}
\end{align}
Note that the new viscosity coefficient $\mue(\rhoe)/\rhoe$ of the parabolic equation \eqref{w-eq} on $\we$ is less degenerate than the viscosity coefficient $\mue(\rhoe)$ of the momentum equation in \eqref{NS-eps}. Through the coupled system of \eqref{w-eq} and the continuity equation $\eqref{NS-eps}_1$, we obtain the higher Sobolev regularity of $\rhoe$ and $\we$ as long as $\rhoe$ is positive (that is guaranteed by \eqref{eps-bdd}) as follows:

\begin{lemma} \label{lem:higher}
Let $\gamma,\alpha$ be any real numbers.
Assume that the initial data $\rho_0$ and $u_0$ satisfy
\begin{align}
\begin{aligned} \label{temp-ini}
&\rho_0 -\bar \rho \in H^k(\bbr),\quad u_0 -\bar u \in H^k(\bbr),\quad \mbox{for some integer } k\ge 2,\\
&0<\underline \kappa_0 \le \rho_0(x) \le \overline \kappa_0, \quad \forall x\in\bbr,
\end{aligned}
\end{align}
for some constants $\underline \kappa_0, \overline \kappa_0$. 
Then, there exists a global-in-time unique smooth solution $(\rho_\eps, u_\eps)$ of \eqref{NS-eps} such that the following holds: 
For any $T>0$, there exists positive constants $\underline \kappa_\eps(T)$, $\overline \kappa_\eps(T)$ and $C=C(T,\gamma,\alpha, k,\eps, \underline\kappa_0,\overline \kappa_0)$  such that  
\eqref{eps-reg}, \eqref{eps-bdd} and
\begin{align*}
\begin{aligned} 
\|\partial_x^k \rho_\eps \|_{L^\infty(0,T;L^2(\bbr))} &+ \|\partial_x^{k-1} w_\eps \|_{L^\infty(0,T;L^2(\bbr))}+\|\partial_x^{k} w_\eps \|_{L^2(0,T;L^2(\bbr))} \\
&\qquad + \|\partial_x^k u_\eps \|_{L^\infty(0,T;L^2(\bbr))} + \|\partial_x^{k+1} u_\eps \|_{L^2(0,T;L^2(\bbr))} \le C.
\end{aligned}
\end{align*}
\end{lemma}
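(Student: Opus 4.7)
The plan is to propagate higher-order Sobolev regularity from the base strong solution given by Proposition 2.1, by exploiting the non-degenerate parabolic structure of equation \eqref{w-eq} for the active potential $\we$. The key observation is that \eqref{eps-bdd} pinches $\rhoe$ between positive constants $\underline\kappa_\eps(T)$ and $\overline\kappa_\eps(T)$, so every coefficient appearing in \eqref{w-eq} is a smooth function of $\rhoe$ (and of $\ue,\px\rhoe$) that is bounded above and below by constants depending only on $T,\eps,\underline\kappa_0,\overline\kappa_0,\alpha,\gamma$. In particular the diffusion coefficient $\mue(\rhoe)/\rhoe$ is uniformly elliptic, so \eqref{w-eq} is a bona fide quasilinear parabolic equation with quadratic semilinear forcing.

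I would argue by induction on $j$ from $j=2$ to $j=k$, at each step propagating the bounds $\rhoe-\bar\rho \in L^\infty_t H^j$, $\we\in L^\infty_t H^{j-1}\cap L^2_t H^j$, and $\ue-\bar u\in L^\infty_t H^j\cap L^2_t H^{j+1}$. For the base case $j=2$, the $H^{k-1}$ regularity of the initial value $\we(0,\cdot)=-p(\rho_0)+\mue(\rho_0)\px u_0$ inherited from \eqref{temp-ini} (with $k\ge 2$) gives $\px\we(0,\cdot)\in L^2$. I would apply $\px$ to \eqref{w-eq}, test against $\px\we$, and integrate in $x$: the principal term yields $\int \frac{\mue(\rhoe)}{\rhoe}(\px^2\we)^2\,dx$; the commutator $[\px,\mue(\rhoe)/\rhoe]\px\we$ is absorbed into a fraction of this diffusion using the 1D embedding $H^1\hookrightarrow L^\infty$ and the uniform bounds on $\rhoe$; and the quadratic term $\we^2$ is controlled by $\|\we\|_{L^\infty}\lesssim \|\we\|_{H^1}$. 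A Gronwall argument then delivers $\we\in L^\infty_t H^1\cap L^2_t H^2$, which through the algebraic identity $\px\ue=(\we+p(\rhoe))/\mue(\rhoe)$ transfers to $\ue-\bar u\in L^\infty_t H^2\cap L^2_t H^3$. Finally, an $L^2$ energy estimate on $\px^2\rhoe$, obtained by differentiating the continuity equation twice in $x$, closes the $H^2$ bound on $\rhoe$: the top-order transport term integrates by parts into a harmless expression involving $\px\ue$, while the source $\rhoe\px^3\ue$ is handled by $\px^3\ue\in L^2_tL^2$.

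The inductive step from $j-1$ to $j$ repeats the same argument at higher order: applying $\px^j$ to \eqref{w-eq}, testing against $\px^{j-1}\we$, and using Moser-type product and composition estimates (available since \eqref{eps-bdd} makes every coefficient a smooth function of a pinched $\rhoe$) reduces every term to the principal diffusion plus a finite sum of expressions controlled by the inductive hypothesis and 1D Sobolev embedding. The $H^j$ bound on $\rhoe$ is then extracted from the continuity equation exactly as in the base case, and the algebraic identity for $\px\ue$ transfers everything back to $\ue$. I expect the main obstacle to be merely the bookkeeping of the many variable coefficients in \eqref{w-eq} at high differentiation order; nothing conceptual is at stake, because once \eqref{eps-bdd} renders the parabolic structure of \eqref{w-eq} non-degenerate, this becomes a classical propagation-of-regularity argument for a 1D quasilinear parabolic equation with quadratic forcing coupled to a transport equation.
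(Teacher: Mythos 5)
There is a genuine gap in your treatment of $\we$. You propose to propagate $\we\in L^\infty_t H^{j-1}$, so in particular (for $j=2$) $\we\in L^\infty_t H^1$, and you control the quadratic term by $\|\we\|_{L^\infty}\lesssim\|\we\|_{H^1}$. But $\we$ is \emph{not} in $L^2(\bbr)$: from \eqref{def-w} and $\rhoe\to\rho_\pm>0$, one has $\we(t,x)\to -p(\rho_\pm)\neq 0$ as $x\to\pm\infty$ (and generically $p(\rho_+)\neq p(\rho_-)$). So $\we\notin H^1(\bbr)$, the embedding you invoke is vacuous, and the claimed $L^\infty_t H^{j-1}$ regularity of $\we$ is simply false. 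This is precisely the point the paper singles out as the reason one cannot directly quote the CDNP lemmas: on the whole line with two different far-fields, one must run the argument tracking only $\px\we\in L^\infty_t L^2 \cap L^2_t H^1$, never $\we$ itself. The $L^\infty$ control on $\we$ needed inside the Gr\"onwall argument comes from the a priori fact $\we=-p(\rhoe)+\mue(\rhoe)\px\ue\in L^2(0,T;L^\infty(\bbr))$ (a consequence of \eqref{eps-reg} and \eqref{eps-bdd}, not of any $H^1$ bound), and the final pointwise bound on $\we$ is recovered after the Gr\"onwall step by a local average-plus-derivative estimate over a unit interval, not by Sobolev embedding from a global $H^1$ norm.

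A secondary issue is the order of closure: you close the $\we$ estimate first, then transfer to $\ue$, then close $\rhoe$. But the Gr\"onwall coefficient in the $\px\we$ energy inequality already contains $\|\px^2\rhoe\|_{L^2}^2$ (via $\|\px\rhoe\|_{L^\infty}$ coming from the drift term $\mue(\rhoe)\px\rhoe/\rhoe^2$), so the $\px\we$ and $\px^2\rhoe$ estimates are genuinely coupled and must be summed and Gr\"onwalled simultaneously, as the paper does by adding the two differential inequalities before applying Gr\"onwall. This second point is a matter of bookkeeping and would likely surface on writing out the argument; the first point, however, is a real conceptual hole that would prevent the proof from going through as written.
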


\vspace{1cm}

This follows straightforwardly from \cite[Lemma 4.2 and 4.3]{CDNP} when $\|w_\eps\|_{L^\infty(0,T;L^2(\bbr))}$ is bounded. However, for the density having two different limits at the infinity, we do not have a $L^2$-bound on $w_\eps (t,x)$ for each $t$.
Therefore, we may prove Lemma \ref{lem:higher} without using a $L^2$-bound on $w_\eps$. Although we need a slight modification of the proof in \cite{CDNP}, we present details of the proof in Appendix \ref{app-higher} for the sake of completeness and the justification on uniformity of the high Sobolev norms in Proposition \ref{lem:all}.

\subsection{Uniform lower bound for the density}
\begin{lemma}\label{lem:up-w}
Assume the same hypotheses as in Theorem \ref{thm:main}.
Then, for any $T>0$, there exist positive constants $C_\gamma$ and $\eps_\gamma$ such that
\begin{align*}
\begin{aligned} 
w_\eps (t,x) \le C_\gamma \eps^\theta, \qquad \forall \eps\le \eps_\gamma, 
\quad \forall t\le T, \quad \forall x\in\bbr,
\end{aligned}
\end{align*}
where $\theta$ is the positive constant as follows:
\begin{align}
\begin{aligned} \label{special-c}
 \theta:=\frac{\gamma}{\alpha-\alpha_*},\qquad\mbox{where $\alpha_*$ is the constant as in \eqref{new-v}}.
\end{aligned}
\end{align}

\end{lemma}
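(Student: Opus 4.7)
The approach is a maximum principle for the parabolic equation \eqref{w-eq}, exploiting sign properties of its reaction coefficients under $\gamma\in[\alpha,\alpha+1]$ and the two-regime definition of $\mu_\eps$ in \eqref{new-v}. I begin by showing $w_\eps(0,\cdot)\le 0$. Since $\rho_0\ge\underline\kappa_0>0$, choosing $\eps_\gamma:=\min(1,\underline\kappa_0^{\alpha-\alpha_*})$ ensures $\rho_0^\alpha \ge \eps\rho_0^{\alpha_*}$ for every $\eps\le\eps_\gamma$, so $\mu_\eps(\rho_0)=\rho_0^\alpha$ pointwise. Combining this with \eqref{mono-w0} gives
\[
w_\eps(0,x) = -\rho_0(x)^\gamma + \rho_0(x)^\alpha\,\partial_x u_0(x) \le -\rho_0^\gamma + \rho_0^\alpha \cdot \rho_0^{\gamma-\alpha}=0.
\]

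Next I analyze the reaction part $C_1(\rho_\eps)w_\eps - D(\rho_\eps)w_\eps^2 + E(\rho_\eps)$ of \eqref{w-eq} in each viscosity regime. In the \emph{non-degenerate regime} $\rho_\eps \ge \eps^{1/(\alpha-\alpha_*)}$, where $\mu_\eps = \rho_\eps^\alpha$, direct computation from \eqref{w-eq} gives
\[
C_1 = (\gamma - 2\alpha - 2)\rho_\eps^{\gamma-\alpha},\qquad E = (\gamma - \alpha - 1) \rho_\eps^{2\gamma-\alpha},
\]
both non-positive since $\gamma-\alpha-1\le 0$ and $\gamma-2\alpha-2\le -\alpha-1<0$; hence for any $w_\eps\ge 0$ the reaction is bounded above by $-D w_\eps^2\le 0$. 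In the \emph{degenerate regime} $\rho_\eps \le \eps^{1/(\alpha-\alpha_*)}$, where $\mu_\eps = \eps\rho_\eps^{\alpha_*}$, the coefficients $C_1$ and $E$ may change sign, but $D = (\alpha_*+1)/\mu_\eps$ is large and one verifies
\[
\frac{|C_1|}{D} = \frac{|\gamma-2\alpha_*-2|}{\alpha_*+1}\,p(\rho_\eps),\qquad \frac{|E|}{D} = \frac{|\gamma-\alpha_*-1|}{\alpha_*+1}\,p(\rho_\eps)^2.
\]
Using $p(\rho_\eps)=\rho_\eps^\gamma \le \eps^{\gamma/(\alpha-\alpha_*)}=\eps^\theta$ in this regime yields $|C_1|/D\le C\eps^\theta$ and $|E|/D\le C\eps^{2\theta}$, so the quadratic $-Dx^2 + |C_1| x + |E|$ is non-positive for $x\ge C'\eps^\theta$ with some $C'=C'(\gamma,\alpha)$.

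To turn this into a global bound, I invoke a Stampacchia-type maximum principle. Set $K:=C_\gamma\eps^\theta$ with $C_\gamma:=\max(C',1)$. By Lemma \ref{lem:higher}, $w_\eps$ is smooth in $x$, and since $\rho_\eps-\bar\rho,\,u_\eps-\bar u\in L^\infty(0,T;H^k(\bbr))$ with $k\ge 4$, we have $w_\eps(t,x)\to -p(\rho_\pm)<0$ as $x\to\pm\infty$; hence $(w_\eps-K)_+$ has compact spatial support for each $t$. Multiplying \eqref{w-eq} by $(w_\eps-K)_+$ and integrating over $\bbr$, the diffusion term $(\mu_\eps/\rho_\eps)\partial_x^2 w_\eps$ contributes non-positively after integration by parts (modulo an $\eps$-dependent lower-order term controlled using the regularity bounds of Lemma \ref{lem:higher}), the transport term produces a similar lower-order contribution, and the reaction term is $\le 0$ on the support $\{w_\eps>K\}$ by the two-case analysis above. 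This yields a Gr\"onwall inequality
\[
\frac{d}{dt}\int_{\bbr} (w_\eps-K)_+^2\,dx \le C(T,\eps)\int_{\bbr}(w_\eps-K)_+^2\,dx,
\]
which, combined with $(w_\eps-K)_+|_{t=0}=0$, forces $(w_\eps-K)_+\equiv 0$, i.e.\ $w_\eps\le C_\gamma\eps^\theta$ throughout $[0,T]\times\bbr$. The main obstacle is the degenerate regime, where a positive source $E>0$ and a possibly positive linear coefficient $C_1$ would naively cause $w_\eps$ to grow; the rescue comes from the strong quadratic damping $-Dw_\eps^2$ (with $D\sim 1/\mu_\eps$ blowing up as $\mu_\eps\to 0$), together with the factorizations $|C_1|/D\lesssim p$ and $|E|/D\lesssim p^2$, which convert the degeneracy threshold $p\le\eps^\theta$ directly into the desired bound.
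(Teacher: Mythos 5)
Your proof is correct, but it takes a genuinely different route from the paper's. The paper runs a pointwise maximum principle on $w_M(t):=\max_{x\in\bbr}w_\eps(t,x)$: using the spatial decay of $w_\eps$ to $-p(\rho_\pm)<0$ and the $C^1$-regularity from Lemma \ref{lem:higher} with $k\ge 4$, it shows $w_M$ is Lipschitz, that $w_M'(t)=\partial_t w_\eps(t,x_t)$ a.e., and that at a maximizer the diffusion and drift terms are nonpositive; since on the relevant time interval $w_M\ge\eps^\theta>0$, the quadratic term $-f_2(\rho_\eps)w_M^2$ is \emph{discarded} (as $f_2\ge 0$), and one is left with the scalar linear ODE inequality $w_M'\le J_1 w_M+J_2$, closed by an integrating factor after showing $J_1,J_2$ are $O(\eps^{(\gamma-\alpha)/(\alpha-\alpha_*)})$ and $O(\eps^{(2\gamma-\alpha)/(\alpha-\alpha_*)})$ in the degenerate regime. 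You instead run an $L^2$ Stampacchia argument on $(w_\eps-K)_+$, and your mechanism in the degenerate regime is exactly the opposite: you \emph{retain} the quadratic damping $-Dw_\eps^2$, and observe that the ratios $|C_1|/D\lesssim p(\rho_\eps)\le\eps^\theta$ and $|E|/D\lesssim p(\rho_\eps)^2\le\eps^{2\theta}$ force the full reaction polynomial to be nonpositive on $\{w_\eps\ge C'\eps^\theta\}$. (The two smallness computations are really the same algebra, packaged differently — the paper normalizes by the linear structure, you normalize by $D$.) A secondary difference is at $t=0$: you shrink $\eps_\gamma$ below $\underline\kappa_0^{\alpha-\alpha_*}$ so the degenerate regime is never active initially and $w_\eps(0,\cdot)\le 0$, whereas the paper gets $w_\eps(0,\cdot)\le\eps^\theta$ unconditionally. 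Both arguments rely on the same sign conditions from $\gamma\in[\alpha,\alpha+1]$, the same $C^2_x$ regularity from Lemma \ref{lem:higher}, and the same spatial decay of $w_\eps$; your version trades the a.e.-differentiability lemma for $w_M$ against the need to justify compact support of $(w_\eps-K)_+$ and the integration-by-parts contributions from the diffusion and drift terms. Those lower-order contributions are only sketched in your write-up (``modulo an $\eps$-dependent lower-order term''), but since Gr\"onwall is applied with vanishing initial data, an $\eps$-dependent constant is harmless, so the gap is merely expository: it would suffice to spell out that on $\{w_\eps>K\}$ one has $\partial_x w_\eps=\partial_x(w_\eps-K)_+$, then apply Young's inequality to absorb $\partial_x(\mu_\eps/\rho_\eps)\,(w_\eps-K)_+\,\partial_x(w_\eps-K)_+$ into the coercive diffusion term, and bound the drift contribution by $\|\partial_x(u_\eps+\mu_\eps\partial_x\rho_\eps/\rho_\eps^2)\|_{L^\infty}\int(w_\eps-K)_+^2\,dx$, all of which are finite for $k\ge 4$.
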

\begin{proof}
First of all, using Lemma \ref{lem:higher} with $k\ge 4$, together with \eqref{NS-eps} and \eqref{def-w}, we have
\[
\rhoe, \ue, \we \in C^1([0,T]\times\bbr).
\]
Then, note from \eqref{def-w}, \eqref{new-v}, \eqref{pressure}, \eqref{mu-def} and the initial condition \eqref{mono-w0} that 
\[
\we(0,x) =-p(\rho_0) +\max\left(\mu(\rho_0), \eps \rho_0^{\alpha_*}\right) \partial_x u_0 \le -\rho_0^\gamma + \max\left(\rho_0^\alpha, \eps \rho_0^{\alpha_*}\right) \rho_0^{\gamma-\alpha}.
\]
Since, for all $x\in\bbr$,
\begin{align*}
\begin{aligned} 
\we(0,x) &\le \left(-\rho_0^\gamma + \rho_0^\alpha \rho_0^{\gamma-\alpha}\right) {\mathbf 1}_{\{\rho_0^\alpha>\eps\rho_0^{\alpha_*}\}} + \left(-\rho_0^\gamma +  \eps \rho_0^{\alpha_*} \rho_0^{\gamma-\alpha}\right) {\mathbf 1}_{\{\rho_0^\alpha\le \eps\rho_0^{\alpha_*}\}}\\
&\le    \eps \rho_0^{\gamma-(\alpha-\alpha_*)} {\mathbf 1}_{\{\rho_0^\alpha\le \eps\rho_0^{\alpha_*}\}} \le \eps^{\frac{\gamma}{\alpha-\alpha_*}},
\end{aligned}
\end{align*}
we have
\[
\we(0,x) \le \eps^\theta,\quad \forall x\in\bbr.
\]
Since $\we \in C([0,T]\times\bbr)$, if there exists a point $(t_0,x_0)\in (0,T]\times\bbr$ such that $\we(t_0,x_0)>\eps^\theta$, 
then there exists $t_1\ge 0$ such that 
\beq\label{neg-le}
\sup_{x\in\bbr} \we (t,x)\le \eps^\theta\quad \forall t\in [0,t_1],\\
\eeq
and
\[
\sup_{x\in\bbr} \we (t,x)> \eps^\theta\quad \forall t\in (t_1,t_0].
\]
Let 
\[
t_2:=\sup \left\{ t\in (t_1, T]~|~  \sup_{x\in\bbr} \we (t,x)> \eps^\theta \right\}.
\]
Then, 
\[
 \sup_{x\in\bbr} \we (t,x)\ge \eps^\theta\quad \forall t\in [t_1,t_2].
\]
Thus, using the fact that for each $t\le T$,
\[
\we (t,x) \to -p(\rho_\pm) \le 0 \quad\mbox{as } ~ x\to\pm\infty, 
\]
we can define the function
\[
w_M(t):=\max_{x\in\bbr} \we (t,x),
\]
which is Lipschitz continuous, and differentiable almost everywhere on $[t_1,t_2]$ thanks to the regularity $\we\in C^1([0,T]\times\bbr)$. 
Moreover, for each $t\in[t_1,t_2]$, there exists $x_t$ such that 
\[
w_M(t)=\we (t,x_t).
\]
Then $w'_M(t) =(\partial_t \we) (t,x_t)$ for a.e. $t\in (t_1,t_2)$, since
\begin{align*}
\begin{aligned} 
w'_M(t) &= \lim_{h\to 0+} \frac{\we(t+h, x_{t+h}) - \we (t,x_t)}{h} \\
&\ge  \lim_{h\to 0+} \frac{\we(t+h, x_{t}) - \we (t,x_t)}{h} = \partial_t \we (t,x_t),\\
w'_M(t) &= \lim_{h\to 0+} \frac{\we (t,x_t)-\we(t-h, x_{t-h})}{h}  \\
&\le  \lim_{h\to 0+} \frac{\we (t,x_t)-\we(t-h, x_{t})}{h} = \partial_t \we (t,x_t).
\end{aligned}
\end{align*}
Using this together with $\px^2 \we(t,x_t)\le 0$, $\px \we(t,x_t)=0$ and $\rhoe\mue'(\rhoe)\ge 0$, we have from \eqref{w-eq} that 
\[
w'_M (t) \le J_1(t) w_M(t) + J_2(t),\quad t\in (t_1,t_2),
\]
where (putting $\rho_M(t):=\rhoe(t,x_t)$) 
\begin{align*}
\begin{aligned} 
&J_1(t):= \frac{\rho_M^\gamma}{\mue(\rho_M)^2} \left( \gamma \mue(\rho_M) -2 \left( \rho_M\mue'(\rho_M)+\mue(\rho_M) \right) \right), \\
&J_2(t):= \frac{\rho_M^{2\gamma}}{\mue(\rho_M)^2} \left( \gamma \mue(\rho_M) - \left( \rho_M\mue'(\rho_M)+\mue(\rho_M) \right) \right) .
\end{aligned}
\end{align*}
Since $\gamma\le \alpha+1$, we have
\begin{align*}
\begin{aligned} 
J_1(t)&= \frac{\rho_M^\gamma}{\mue(\rho_M)^2} \left( (\gamma -2(\alpha+1))\rho_M^\alpha {\mathbf 1}_{\{\rho_M^\alpha>\eps\rho_M^{\alpha_*}\}} + \eps\left( \gamma-2(\alpha_*+1) \right)  \rho_M^{\alpha_*} {\mathbf 1}_{\{\rho_M^\alpha\le\eps\rho_M^{\alpha_*}\}}  \right)\\
&\le \frac{\rho_M^\gamma}{\mue(\rho_M)^2} \eps\left| \gamma- 2(\alpha_*+1)\right|  \rho_M^{\alpha_*} {\mathbf 1}_{\{\rho_M^\alpha\le\eps\rho_M^{\alpha_*}\}}.
\end{aligned}
\end{align*}
Moreover, using $\mue(\rho_M)\ge \eps \rho_M^{\alpha_*}$ and $\mue(\rho_M)\ge \rho_M^\alpha$ by the definition, we have
\[
J_1(t) \le \left| \gamma-2(\alpha_*+1) \right|  \rho_M^{\gamma-\alpha} {\mathbf 1}_{\{\rho_M^\alpha\le\eps\rho_M^{\alpha_*}\}} \le \left| \gamma-2(\alpha_*+1) \right| \eps^{\frac{\gamma-\alpha}{\alpha-\alpha_*}}.
\]
Likewise, we have
\begin{align*}
\begin{aligned} 
J_2(t)&= \frac{\rho_M^{2\gamma}}{\mue(\rho_M)^2} \left( (\gamma -(\alpha+1))\rho_M^\alpha {\mathbf 1}_{\{\rho_M^\alpha>\eps\rho_M^{\alpha_*}\}} + \eps\left( \gamma-(\alpha_*+1) \right)  \rho_M^{\alpha_*} {\mathbf 1}_{\{\rho_M^\alpha\le\eps\rho_M^{\alpha_*}\}}  \right)\\
&\le \frac{\rho_M^{2\gamma}}{\mue(\rho_M)^2} \eps\left| \gamma-(\alpha_*+1) \right|  \rho_M^{\alpha_*} {\mathbf 1}_{\{\rho_M^\alpha\le\eps\rho_M^{\alpha_*}\}}\\
&\le  \left| \gamma- (\alpha_*+1) \right| \eps^{\frac{2\gamma-\alpha}{\alpha-\alpha_*}}.
\end{aligned}
\end{align*}
The above estimates and \eqref{neg-le} imply that for any $t\in [t_1,t_2]$ and $\eps\in(0,1)$,
\begin{align}
\begin{aligned}\label{temp-wm} 
w_M(t) &\le w_M(t_1) \exp\left( \int_{t_1}^t J_1(s)ds \right) +  \int_{t_1}^t J_2(s) \exp\left( \int_{s}^t J_1(\tau) d\tau \right) ds\\
&\le   \exp\left( T\left| \gamma- 2(\alpha_*+1) \right| \right) \left( \eps^\theta + \eps^{\frac{2\gamma-\alpha}{\alpha-\alpha_*}} T \left| \gamma-  (\alpha_*+1) \right| \right) ,
\end{aligned}
\end{align}
If $\gamma>\alpha$, it follows from \eqref{temp-wm} that for all $\eps$ satisfying
\[
\eps\le \left(\frac{1}{1+ T \left| \gamma-(\alpha_*+1) \right|}\right)^{\frac{\alpha-\alpha_*}{\gamma-\alpha}},
\]
the following holds:
\[
w_M(t) \le 2\exp\left( T\left| \gamma- 2(\alpha_*+1) \right| \right) \eps^\theta, \quad \forall t\in [t_1,t_2].
\]
If $\gamma=\alpha$, since $\theta=\frac{2\gamma-\alpha}{\alpha-\alpha_*}$, it follows from \eqref{temp-wm} that 
\[
w_M(t) \le 2 \left(1+ T \left| \gamma-  (\alpha_*+1) \right| \right)  \exp\left( T\left| \gamma- 2(\alpha_*+1) \right| \right) \eps^\theta,\quad \forall\eps\le1, \quad \forall t\in [t_1,t_2].
\]
Therefore, the above estimates together with \eqref{neg-le} yield that 
\begin{align*}
\begin{aligned} 
\sup_{x\in\bbr}  w_\eps (t,x) \le 
C_\gamma \eps^\theta, \qquad \forall \eps\le \eps_\gamma, 
\quad \forall t\in [0,t_2],
\end{aligned}
\end{align*}
where $C_\gamma$  is the constants as in \eqref{special-c}.\\
If $t_2 <T$, then the definition of $t_2$ implies
\[
\sup_{x\in\bbr} \we (t,x) \le \eps^\theta, \quad \forall t\in (t_2, T].
\]
Hence we complete the proof.
\end{proof}

\begin{proposition}\label{lem:rho2}
Assume the same hypotheses as in Theorem \ref{thm:main}. Then, for any $T>0$, there exist positive constants $\underline\kappa(T)=\underline\kappa(T)(\gamma, \alpha, \underline \kappa_0)$ and $\delta_1=\delta_1(T,\gamma, \alpha, \underline \kappa_0)$ (independent of $\eps$) such that
\[
\rhoe (t,x) \ge \underline\kappa(T),\qquad \forall t\le T,\quad \forall x\in\bbr, \quad \forall \eps \le \delta_1 .
\]
\end{proposition}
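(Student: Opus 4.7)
The plan is to follow the density along characteristics $\dot X = \ue(t,X)$, $X(0;x_0) = x_0$, and use the upper bound $\we \le C_\gamma \eps^\theta$ from Lemma \ref{lem:up-w} to close an ODE inequality on $\phi(t;x_0) := \rhoe(t,X(t;x_0))$. The regularity from Lemma \ref{lem:higher} with $k \ge 4$ makes $\ue$ Lipschitz in $x$ on $[0,T]\times\bbr$, so the flow is a $C^1$ diffeomorphism of $\bbr$ onto $\bbr$ for every $t \in [0,T]$, and a uniform lower bound on $\phi$ in $x_0$ translates into a uniform lower bound on $\rhoe$ in $x$.

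From the continuity equation $\eqref{NS-eps}_1$ and the defining identity $\mue(\rhoe)\px\ue = \we + \rhoe^\gamma$, I would write
\[
\dot\phi \;=\; -\phi\,(\px\ue)(t,X) \;=\; -\phi\,\frac{\we + \phi^\gamma}{\mue(\phi)}.
\]
The crucial estimate is to bound $\dot\phi$ below: using $\we \le C_\gamma \eps^\theta$, splitting into two non-negative pieces, and estimating the $\we$-piece with $\mue \ge \eps\phi^{\alpha_*}$ while estimating the $\phi^\gamma$-piece with $\mue \ge \phi^\alpha$, I expect
\[
\dot\phi \;\ge\; -C_\gamma \,\eps^{\theta-1}\phi^{1-\alpha_*} \;-\; \phi^{\gamma-\alpha+1}.
\]
Note $\theta > 1$ (since $\gamma \ge \alpha > \alpha - \alpha_*$), $1-\alpha_* > 0$ (since $\alpha_* \le 1/4$), and $\gamma-\alpha+1 \ge 1$, so the $\eps$-prefactor is a positive power of $\eps$ and both $\phi$-exponents are nonnegative.

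On the set $\{\phi \le 1\}$, the bounds $\phi^{1-\alpha_*} \le 1$ and $\phi^{\gamma-\alpha+1} \le \phi$ reduce the inequality to $\dot\phi \ge -C_\gamma \eps^{\theta-1} - \phi$. For each $(t,x_0)$ with $\phi(t;x_0) \le 1$, I would set $\tau$ to be the largest $s \in [0,t]$ at which $\phi(s;x_0) \ge 1$, with the convention $\tau = 0$ if no such $s$ exists; then $\phi \le 1$ on $[\tau,t]$ and $\phi(\tau;x_0) \ge \min(1,\underline\kappa_0)$. Gronwall's inequality on $[\tau,t]$ then yields
\[
\phi(t;x_0) \;\ge\; e^{-T}\min(1,\underline\kappa_0) \;-\; C_\gamma\,\eps^{\theta-1}.
\]
Choosing $\delta_1 = \delta_1(T,\gamma,\alpha,\underline\kappa_0)$ small enough to absorb the residual into half of the first term (and $\delta_1 \le \eps_\gamma$) gives the uniform lower bound $\underline\kappa(T) := \tfrac12 e^{-T}\min(1,\underline\kappa_0)$, and taking $\inf_{x_0}$ concludes.

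The main obstacle is the two-regime structure of $\mue = \max(\rho^\alpha, \eps\rho^{\alpha_*})$: using only $\mue \ge \phi^\alpha$ leaves a $\phi^{1-\alpha}$ term which blows up as $\phi \to 0$ when $\alpha > 1$, while using only $\mue \ge \eps\phi^{\alpha_*}$ introduces an uncancelled $\eps^{-1}$ in the $\phi^\gamma$-contribution. The mixed splitting above is exactly the one that simultaneously keeps every exponent of $\phi$ nonnegative and every exponent of $\eps$ positive, so the bound degenerates to the expected $\eps=0$ ODE $\dot\phi \ge -\phi^{\gamma-\alpha+1}$ in the limit.
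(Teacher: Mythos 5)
Your proof is correct, and it takes a genuinely different route from the paper's. The starting points coincide: tracking $\phi(t)=\rhoe(t,X(t;x_0))$ along characteristics is equivalent to the paper's tracking of $\rho_m(t)=\min_x\rhoe(t,x)$ (the paper's identity $\rho_m'=-\rho_m\,\px\ue(t,y_t)$ is precisely $\dot\phi$ at the minimizing trajectory), and both plug in $\px\ue=(\we+\rhoe^\gamma)/\mue(\rhoe)$ and Lemma~\ref{lem:up-w}. The real divergence is in how the lower bound on $\mue$ is deployed. The paper uses $\mue\ge\rho^\alpha$ for \emph{both} the $\we$-term and the pressure term, producing $\rho_m'\ge -\rho_m^{1+\gamma-\alpha}-C_\gamma\eps^\theta\rho_m^{1-\alpha}$; since $1-\alpha$ can be negative, they must work inside a bootstrap window where $\rho_m\le 2\delta_1^{q(\gamma)/\gamma}$, use this smallness to trade $\rho_m^\gamma$ for a power of $\delta_1$, and integrate $(\rho_m^\alpha)'$. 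The mismatch between the two $\delta_1$-exponents that emerges ($\delta_1^{\theta\alpha/\gamma}$ vs.\ $\delta_1^\theta$) degenerates exactly as $\gamma\downarrow\alpha$, which is why the paper needs the case split $\gamma>\alpha$ versus $\gamma=\alpha$. Your mixed split — $\mue\ge\eps\phi^{\alpha_*}$ on the $\we$-piece, $\mue\ge\phi^\alpha$ on the $p$-piece — spends one factor of $\eps$ (leaving $\eps^{\theta-1}$, still a positive power because $\theta=\gamma/(\alpha-\alpha_*)>1$ whenever $\gamma\ge\alpha$) to replace the bad exponent $1-\alpha$ by the always-nonnegative $1-\alpha_*$. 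On $\{\phi\le1\}$ this collapses the comparison to the linear ODE $\dot\phi\ge -C_\gamma\eps^{\theta-1}-\phi$, one Gronwall, no bootstrap window, no case split, and a lower bound $\tfrac12 e^{-T}\min(1,\underline\kappa_0)$ that does not even depend on $\gamma,\alpha$. The price is a slightly less sharp small-$\eps$ threshold (you need $C_\gamma\eps^{\theta-1}$ small rather than $C_\gamma\eps^\theta$ small), which is immaterial. Net effect: same conclusion, shorter and more uniform argument.
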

\begin{proof}
Let
\[
q(\gamma):=\left\{ \begin{array}{ll}
\theta  \qquad \mbox{if } \gamma>\alpha, \\
 1 \qquad \mbox{if } \gamma=\alpha, \end{array} \right. \quad\mbox{where~} \theta=\frac{\gamma}{\alpha-\alpha_*} \mbox{ as in Lemma \ref{lem:up-w}}.
\]
We first choose  a constant $\delta_1>0$ such that
\beq\label{eps1}
\delta_1 := \left\{ \begin{array}{ll}
\min \left(\eps_\gamma, \left( \frac{\underline \kappa_0}{4} \right)^{\alpha-\alpha_* }, \left( \frac{2^\alpha -1}{ \alpha (2^\gamma+C_\gamma)T} \right)^{\frac{\gamma}{q(\gamma)(\gamma-\alpha)}} \right) \qquad \mbox{if } \gamma>\alpha, \\
 \min \left( \eps_\gamma, \left( \frac{\underline \kappa_0}{4} \right)^\alpha , \left( C_\gamma^{-1}(2^\alpha -1)  e^{-\alpha T} \right)^{\frac{\alpha-\alpha_*}{\alpha_*}}\right) \quad\quad \mbox{if } \gamma=\alpha, \end{array} \right.
\eeq
where $\underline \kappa_0$ is the constant as in \eqref{ass-ini}, and $\eps_\gamma, C_\gamma$ are the constants as in Lemma \ref{lem:up-w}.\\

Then, since 
\[
\delta_1 \le  \left\{ \begin{array}{ll}
\left( \frac{\underline \kappa_0}{4} \right)^{\alpha-\alpha_* }\qquad \mbox{if } \gamma>\alpha, \\
\left( \frac{\underline \kappa_0}{4} \right)^\alpha  \quad\qquad \mbox{if } \gamma=\alpha, \end{array} \right.
\]
we have $2\delta_1^{q(\gamma)/\gamma} < \underline \kappa_0$ for any $\gamma\ge\alpha$. \\
Therefore, it follows from the initial condition of \eqref{ass-ini} that
\[
\inf_{x\in\bbr} \rho_0 (x)\ge 2\delta_1^{q(\gamma)/\gamma}.
\]
For any fixed $\eps\le\delta_1$, since $\rhoe \in C([0,T]\times\bbr)$, if there exists a point $(t_0,x_0)\in (0,T]\times\bbr$ such that $\rhoe(t_0,x_0)<2\delta_1^{q(\gamma)/\gamma}$, 
then there exists $t_1\ge 0$ such that 
\beq\label{rhoet1}
\inf_{x\in\bbr} \rhoe (t,x)\ge 2\delta_1^{q(\gamma)/\gamma}\quad \forall t\in [0,t_1],
\eeq
\[
 \inf_{x\in\bbr} \rhoe (t,x)< 2 \delta_1^{q(\gamma)/\gamma}\quad \forall t\in (t_1,t_0].
\]
Then, 
\beq\label{inf-rhot}
 \inf_{x\in\bbr} \rhoe (t,x)\le 2 \delta_1^{q(\gamma)/\gamma} \quad \forall t\in [t_1,t_2],
\eeq
where 
\[
t_2:=\sup \left\{ t\in (t_1, T]~|~  \inf_{x\in\bbr} \rhoe (t,x)< 2\delta_1^{q(\gamma)/\gamma} \right\}.
\]
Thus, using $2 \delta_1^{q(\gamma)/\gamma} < \underline \kappa_0 \le \min(\rho_-,\rho_+)$  together with the fact that for each $t\le T$,
\[
\rhoe (t,x) \to \rho_\pm  \quad\mbox{as } ~ x\to\pm\infty, 
\]
we define the function
\[
\rho_m(t):=\min_{x\in\bbr} \rhoe (t,x),
\]
which is Lipschitz continuous, and differentiable almost everywhere on $[t_1,t_2]$ thanks to the regularity $\rhoe\in C^1([0,T]\times\bbr)$.
So, let $y_t$ be a minimizer for $\rho_m(t)=\rhoe (t,y_t)$. Since $\rho'_m(t) =(\partial_t \rhoe) (t,y_t)$ for a.e. $t\in (t_1,t_2)$, and $\px \rhoe(t,y_t)=0$, we have from the continuity equation of \eqref{NS-eps} that
\[
\rho'_m(t) = -\rho_m(t) \px \ue (y_t),\quad  t\in (t_1,t_2) .
\]
Then, using \eqref{def-w}, Lemma \ref{lem:up-w} with $\eps\le\delta_1\le\eps_\gamma$, and $\mue(\rho_m)\ge \rho_m^\alpha$, we have
\beq\label{rhoma}
\rho'_m(t) = -\rho_m(t) \frac{p(\rho_m)+\we(y_t)}{\mue(\rho_m)} \ge  -\rho_m^{1+\gamma-\alpha} - C_\gamma\delta_1^\theta \rho_m^{1-\alpha},\qquad   t\in (t_1,t_2) .
\eeq
{\bf Case of $\gamma>\alpha$)} 
Using \eqref{inf-rhot} together with $q(\gamma)=\theta$, we have
\[
\rho'_m \ge -  (2^\gamma+C_\gamma) \delta_1^\theta \rho_m^{1-\alpha},
\]
which yields
\[
(\rho_m^\alpha )' \ge - \alpha (2^\gamma+C_\gamma)  \delta_1^\theta,\quad   t\in (t_1,t_2) .
\]
Thus, using \eqref{rhoet1}, we have
\[
\rho_m^\alpha(t) \ge \rho_m^\alpha(t_1) - \alpha(2^\gamma+C_\gamma) \delta_1^\theta T\ge \left(2\delta_1^{q(\gamma)/\gamma} \right)^\alpha- \alpha(2^\gamma+C_\gamma)\delta_1^\theta T ,\quad  \forall t\in [t_1,t_2].
\]
Since $q(\gamma)=\theta$ when $\gamma>\alpha$, and
\[
\delta_1 \le \left( \frac{2^\alpha -1}{ \alpha (2^\gamma+C_\gamma)T} \right)^{\frac{\gamma}{q(\gamma)(\gamma-\alpha)}},
\]
we have
\[
\rho_m^\alpha (t) \ge  \left( \delta_1^{q(\gamma)/\gamma}  \right)^\alpha  ,\quad  \forall t\in [t_1,t_2].
\]
Therefore, this together with \eqref{rhoet1} and the definition of $t_2$ implies 
\[
\inf_{x\in\bbr} \rhoe (t,x)\ge \delta_1^{q(\gamma)/\gamma}\quad \forall t\in [0,T].
\]
{\bf Case of $\gamma=\alpha$)} 
First, it follows from \eqref{rhoma} with $\gamma=\alpha$ that
\[
\rho'_m\ge  -\rho_m - C_\gamma\delta_1^\theta \rho_m^{1-\alpha} ,\qquad   t\in (t_1,t_2) .
\]
Then, since
\[
(\rho_m^\alpha )' \ge -\alpha \rho_m^\alpha - \alpha C_\gamma\delta_1^\theta,\qquad   t\in (t_1,t_2) ,
\]
we have
\[
\rho_m^\alpha(t) \ge \rho_m^\alpha (t_1) e^{-\alpha(t-t_1)} -\alpha C_\gamma\delta_1^\theta \int_{t_1}^t e^{-\alpha(t-s)} ds,
\]
which together with  \eqref{rhoet1} yields
\[
\rho_m^\alpha(t) \ge \left(2\delta_1^{q(\gamma)/\gamma} \right)^\alpha e^{-\alpha T} - C_\gamma\delta_1^\theta,\qquad  \forall t\in [t_1,t_2].
\]
Since $q(\gamma)/\gamma= 1/\alpha$ and $\theta=\alpha/(\alpha-\alpha_*)$ when $\gamma=\alpha$, if needed, taking $\delta_1$ again such that 
\[
\delta_1 \le \left( C_\gamma^{-1}(2^\alpha -1)  e^{-\alpha T} \right)^{\frac{\alpha-\alpha_*}{\alpha_*}},
\]
we have
\[
\rho_m^\alpha(t) \ge  e^{-\alpha T} \delta_1 ,\quad  \forall t\in [t_1,t_2].
\]
Therefore, this together with \eqref{rhoet1} and the definition of $t_2$ implies 
\[
\inf_{x\in\bbr} \rhoe (t,x)\ge e^{-T} \delta_1^{1/\alpha} = e^{-T}  \delta_1^{q(\gamma)/\gamma}\quad \forall t\in [0,T].
\]
Hence we complete the proof.
\end{proof}

\subsection{Uniform bounds for the solutions $(\rhoe,\ue)$} \label{sec:all}

Thanks to Proposition \ref{lem:rho2}, we first have the uniform upper bound for the density as follows:
\begin{proposition}\label{lem:rho3}
Under the same hypotheses as in Theorem \ref{thm:main}, there exists a positive constant $\overline\kappa(T)$ (independent of $\eps$) such that
\[
\rhoe (t,x) \le \overline\kappa(T),\qquad \forall t\le T,\quad \forall x\in\bbr, \quad \forall \eps \le\delta_1,
\]
where $\delta_1$ is the constant as in Proposition \ref{lem:rho2}. 
\end{proposition}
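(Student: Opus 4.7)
The strategy is to adapt the envelope-ODE argument from the proof of Proposition~\ref{lem:rho2} to $\rho_M(t):=\sup_{x\in\bbr}\rhoe(t,x)$. Because $\rhoe\in C^1([0,T]\times\bbr)$ and $\rhoe(t,x)\to\rho_\pm$ as $x\to\pm\infty$, whenever $\rho_M(t)>\max(\rho_-,\rho_+)$ the supremum is attained at a finite maximizer $x_t$, $\rho_M$ is Lipschitz in $t$, and evaluating the continuity equation at $x_t$ (where $\px\rhoe=0$) together with the identity $\px\ue=(\we+p(\rhoe))/\mue(\rhoe)$ yields, for a.e.\ $t$,
\[
\rho_M'(t)=-\rho_M(t)\,\px\ue(t,x_t)=-\rho_M(t)\frac{\we(t,x_t)+\rho_M(t)^\gamma}{\mue(\rho_M(t))}.
\]

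I would then plug in the available estimates. Proposition~\ref{lem:rho2} supplies $\rhoe\ge\underline\kappa(T)$ uniformly in $\eps\le\delta_1$, so (up to shrinking $\delta_1$) we have $\mue(\rho_M(t))=\rho_M(t)^\alpha$; Lemma~\ref{lem:up-w} gives $\we(t,x_t)\le C_\gamma\eps^\theta$. With $\gamma\in[\alpha,\alpha+1]$, the ODE becomes
\[
\rho_M'(t)=-\rho_M(t)^{1+\gamma-\alpha}-\rho_M(t)^{1-\alpha}\,\we(t,x_t),
\]
whose first, dissipative term is coercive of order at least $1$ in $\rho_M$. If one had an $\eps$-uniform lower bound $\we(t,x_t)\ge-C(T)$ at the maximizer, the second term would be dominated by the first once $\rho_M$ passes a threshold depending on $\gamma,\alpha,T$ and the initial data, and a standard scalar comparison would immediately give $\rho_M(t)\le\overline\kappa(T)$ independent of $\eps\le\delta_1$.

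The main obstacle is precisely that Lemma~\ref{lem:up-w} is one-sided: it bounds $\we$ from above but not from below, so a priori $\we(t,x_t)$ could be very negative and turn the "bad" term $-\rho_M^{1-\alpha}\we(t,x_t)$ into a large positive forcing. To supply the missing lower bound on $\we$ I would run a continuation/bootstrap argument. Fix $M_1>0$ to be chosen and set $T^*:=\sup\{t\in[0,T]:\rhoe(s,\cdot)\le M_1\text{ on }[0,t]\}$. On $[0,T^*]$ the coefficients of the $w$-equation \eqref{w-eq} are uniformly bounded in $\eps$, because $\rhoe\in[\underline\kappa(T),M_1]$ there; combining this with the uniform initial bound $\|\we(0,\cdot)\|_{L^\infty(\bbr)}\le C_0$ (which follows from $\rho_0\in[\underline\kappa_0,\overline\kappa_0]$ and $u_0-\bar u\in H^k(\bbr)$ with $k\ge 4$, via $\we(0,\cdot)=-p(\rho_0)+\mu_\eps(\rho_0)\partial_x u_0$ and Sobolev embedding) allows one to extract from \eqref{w-eq} an $\eps$-uniform pointwise lower bound $\we(t,x)\ge-C(T,M_1)$ valid on $[0,T^*]\times\bbr$. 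Plugging this into the ODE for $\rho_M$ gives, on $[0,T^*]$, $\rho_M'(t)\le-\tfrac12\rho_M(t)^{1+\gamma-\alpha}$ whenever $\rho_M(t)$ exceeds a threshold $M_0(T,M_1)$. Choosing $M_1>\max(\overline\kappa_0,M_0(T,M_1))$ (possible by monotonicity of the threshold in $M_1$) keeps $\rho_M$ below $M_1$ throughout $[0,T^*]$, which by continuity forces $T^*=T$ and yields $\rhoe\le M_1=:\overline\kappa(T)$ uniformly in $\eps\le\delta_1$.
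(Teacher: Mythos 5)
Your proposal is structurally different from the paper's proof and, as written, has a genuine gap at the decisive step.

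The paper does not run a maximum-principle/ODE argument for the upper bound. It invokes the Bresch--Desjardins entropy estimates \eqref{2est-bd} (from \cite[Lemmas 3.1 and 3.2]{MV_sima}), which are $\eps$-uniform because of \eqref{up-mue}, together with the uniform lower bound on $\mue(\rhoe)$ from Proposition~\ref{lem:rho2}, and then follows \cite[Proposition 4.5]{MV_sima}. That argument is integral in nature: roughly, the second line of \eqref{2est-bd} controls $\sqrt{\rhoe}\,\px\varphi(\rhoe)$ in $L^\infty_t L^2_x$, which (with the lower bound on $\rhoe$) gives $\px(\rhoe^{\alpha-1/2})\in L^\infty_t L^2_x$, and combined with the relative-pressure bound this yields a uniform $L^\infty$ upper bound on $\rhoe$. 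No pointwise lower bound on $\we$ is used or needed.

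Your argument instead reduces everything to an $\eps$-uniform pointwise lower bound $\we(t,x)\ge -C(T,M_1)$ on the bootstrap interval $[0,T^*]$, and this is precisely where the proof breaks. You assert it follows from the boundedness of the coefficients of \eqref{w-eq} plus an initial $L^\infty$ bound, but the natural mechanism (a minimum principle/Riccati comparison applied to $w_m(t):=\min_x\we(t,x)$) does not deliver it. At a spatial minimum with $w_m<0$, the diffusion and drift terms in \eqref{w-eq} are favorable, and indeed $f_1(\rho)<0$ (since $\gamma\le\alpha+1<2(\alpha+1)$) so $f_1 w_m>0$ helps; but the quadratic term $-f_2(\rhoe)\we^2$ with $f_2>0$ has the \emph{wrong} sign at a minimum and pushes $w_m$ further down. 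Writing $y=-w_m>0$, one only obtains a differential inequality of the form $y'\le -|f_1|\,y + f_2\,y^2 + |f_3|$, a Riccati inequality whose comparison solution blows up to $+\infty$ in finite time once $y(0)$ exceeds roughly $|f_1|/f_2$ — and the hypotheses of Theorem~\ref{thm:main} allow arbitrarily large initial data, hence arbitrarily large $\|\we(0,\cdot)\|_{L^\infty}$. So the barrier may blow up before time $T$, the comparison gives no information afterwards, and the claimed uniform lower bound on $\we$ is not established. (Note the asymmetry with Lemma~\ref{lem:up-w}: at a spatial maximum the quadratic term $-f_2\we^2$ has the \emph{favorable} sign, which is exactly why the one-sided upper bound on $\we$ goes through but the lower bound does not.) To repair your route you would need either a smallness condition on the data, or some additional structural input — which is effectively what the BD entropy provides in the paper's proof.
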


For the proof of Proposition \ref{lem:rho3}, we refer to the proof of \cite[Proposition 4.5]{MV_sima}, in which the uniform estimates \eqref{kamue} and \eqref{2est-bd} are crucially used to get the uniform upper bound $ \overline\kappa(T)$ of the density:
One estimate is on the uniform lower bound of the viscosity $\mue$ as
\beq\label{kamue}
\mue(\rhoe) \ge \rhoe^\alpha \ge \underline\kappa(T)^\alpha ,\qquad \forall t\le T,\quad \forall x\in\bbr, \quad \forall \eps \le \delta_1.
\eeq
The others are the estimates \cite[Lemmas 3.1 and 3.2]{MV_sima} on the relative entropy related to the Bresch-Desjardins entropy (see \cite{BD_Paris02,BD_cmp03,BD_Paris04}) as follows:
\begin{align}
\begin{aligned}\label{2est-bd} 
&\sup_{0\le t\le T} \int_\bbr  \left(\rhoe \left| \ue-\bar u \right|^2 + p(\rhoe|\bar\rho) \right) dx + \int_0^T\int_\bbr \mue(\rhoe) |\px \ue|^2 dx dt \le K,\\
&\sup_{0\le t\le T} \int_\bbr  \left(\rhoe \left|( \ue-\bar u )+ \px (\varphi(\rhoe)) \right|^2 + p(\rhoe|\bar\rho) \right) dx \le K,\\
\end{aligned}
\end{align}
where $\varphi'(\rhoe):=\mue(\rhoe)/\rhoe^2$, and the above constant $K$ is independent of $\eps$ thanks to \eqref{up-mue}. Indeed, it follows from \cite[Lemmas 3.1 and 3.2]{MV_sima} that the constant $K$ depends only on $T, \gamma, (\bar\rho, \bar u), (\rho_0,u_0)$, and the constants appearing in \eqref{oldv-2}. \\

Propositions  \ref{lem:rho2} and \ref{lem:rho3} together with the above estimates \eqref{kamue}-\eqref{2est-bd} imply the following uniform estimates on the Sobolev norms of the solutions  $(\rhoe,\ue)$ :

\begin{proposition}\label{lem:all}
Under the same hypotheses as in Theorem \ref{thm:main}, there exists a constant $C$ (independent of $\eps$) such that
\[
\|\rho_\eps -\bar \rho \|_{L^\infty(0,T;H^k(\bbr))} + \| u_\eps-\bar u \|_{L^\infty(0,T;H^k(\bbr))} +  \| u_\eps-\bar u \|_{L^2(0,T;H^{k+1}(\bbr))} \le C.
\]
\end{proposition}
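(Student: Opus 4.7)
The plan is to revisit the proof of Lemma \ref{lem:higher} (carried out in Appendix \ref{app-higher}) and show that each estimate there can be taken independent of $\eps$. The new input is the $\eps$-uniform density bound $\underline\kappa(T)\le\rhoe\le\overline\kappa(T)$ from Propositions \ref{lem:rho2}--\ref{lem:rho3}. The key simplifying observation is that, by the choice of $\delta_1$ in \eqref{eps1} (up to harmlessly shrinking it by a factor like $e^{-T(\alpha-\alpha_*)}$ in the borderline case $\gamma=\alpha$), one has $\underline\kappa(T)>\eps^{1/(\alpha-\alpha_*)}$ for every $\eps\le\delta_1$. Hence $\rhoe^\alpha>\eps\rhoe^{\alpha_*}$ pointwise, so by \eqref{new-v} one actually has $\mue(\rhoe)=\rhoe^\alpha$ identically along the solution. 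As a consequence, every coefficient appearing in the active-potential equation \eqref{w-eq} is a smooth function of $\rhoe$ on the compact set $[\underline\kappa(T),\overline\kappa(T)]$, with $C^m$-norms depending only on $(\alpha,\gamma,\underline\kappa(T),\overline\kappa(T))$ and not on $\eps$. In particular, the parabolic coefficient $\mue(\rhoe)/\rhoe\simeq\rhoe^{\alpha-1}$ of \eqref{w-eq} is uniformly elliptic.

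With this in place, $\eps$-uniform $H^1$-control follows from the Bresch--Desjardins inequalities \eqref{2est-bd}: writing $\px\varphi(\rhoe)=\varphi'(\rhoe)\px\rhoe$ with $\varphi'(\rhoe)=\mue(\rhoe)/\rhoe^2$ uniformly bounded above and below, the second line of \eqref{2est-bd} yields an $L^\infty(0,T;L^2(\bbr))$ bound on $\px\rhoe$, hence on $\rhoe-\bar\rho$ in $H^1$. The first line, combined with \eqref{kamue}, produces the corresponding $L^\infty_tL^2_x$ bound on $\ue-\bar u$ and the $L^2_tL^2_x$ bound on $\px\ue$, all with $\eps$-free constants.

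From this base level one iterates the inductive scheme of Appendix \ref{app-higher}: at each level $j=2,\dots,k$, apply $\px^j$ to the continuity equation $\eqref{NS-eps}_1$ and $\px^{j-1}$ to \eqref{w-eq}, multiply by the appropriate test function, and integrate. All right-hand sides decompose into products of smooth bounded functions of $\rhoe$ (controlled pointwise by the above identity $\mue(\rhoe)=\rhoe^\alpha$ on $[\underline\kappa(T),\overline\kappa(T)]$) and lower-order derivatives (controlled by the inductive hypothesis); Sobolev embedding and a Gr\"onwall argument then deliver the full set of estimates with constants $C=C(T,\gamma,\alpha,k,\underline\kappa_0,\overline\kappa_0,\|\rho_0-\bar\rho\|_{H^k},\|u_0-\bar u\|_{H^k})$ that are independent of $\eps$. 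The bounds on $\ue$ are recovered at the end from the algebraic identity $\px\ue=(\we+p(\rhoe))/\mue(\rhoe)$, which converts $\we$-derivatives into $\ue$-derivatives modulo smooth factors of $\rhoe$.

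The main obstacle is legitimising the claim $\mue(\rhoe)=\rhoe^\alpha$, since the max in \eqref{new-v} creates a kink at $\rho=\eps^{1/(\alpha-\alpha_*)}$ that would otherwise obstruct the chain-rule manipulations used on higher derivatives. For $\gamma>\alpha$ this is immediate from \eqref{eps1}, which forces $\underline\kappa(T)=\delta_1^{1/(\alpha-\alpha_*)}\ge\eps^{1/(\alpha-\alpha_*)}$; for $\gamma=\alpha$, where Proposition \ref{lem:rho2} only gives $\underline\kappa(T)=e^{-T}\delta_1^{1/\alpha}$, comparing with $\eps^{1/(\alpha-\alpha_*)}\le\delta_1^{1/(\alpha-\alpha_*)}$ and using $1/\alpha<1/(\alpha-\alpha_*)$ with $\delta_1<1$ yields the required strict inequality after at worst replacing $\delta_1$ by $\delta_1\cdot e^{-T(\alpha-\alpha_*)}$. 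Once this is justified, the entire induction proceeds exactly as in the non-degenerate setting, giving the announced uniform bounds.
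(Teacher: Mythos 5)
Your overall strategy parallels the paper's: upgrade the $\eps$-dependent constants in Lemma \ref{lem:higher} to $\eps$-free ones using the uniform density bounds of Propositions \ref{lem:rho2}--\ref{lem:rho3} and then run the induction in the appendix. Two remarks are in order, one a stylistic rearrangement, the other a genuine gap.

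The observation that $\mue(\rhoe)=\rhoe^{\alpha}$ pointwise for $\eps$ small enough (after shrinking $\delta_1$ in the borderline case $\gamma=\alpha$) is exactly what the paper proves in its Conclusion subsection, where it introduces $\delta_T=\min(\underline\kappa(T)^{\alpha-\alpha_*},\delta_1)$. Invoking it already at the level of Proposition \ref{lem:all} is a legitimate rearrangement; it does make the coefficients in \eqref{w-eq} genuinely smooth functions of $\rhoe$, which is convenient because the max in \eqref{new-v} is only Lipschitz in $\rho$ and the appendix implicitly differentiates it. The paper's own route avoids this by simply noting that $\mue,\mue'$ are uniformly controlled on $[\underline\kappa(T),\overline\kappa(T)]$ for $\eps\in(0,1)$, relying on the kink not being crossed. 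Your version is cleaner, and the extra shrinking of $\delta_1$ is harmless.

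The gap is in the passage from the Bresch--Desjardins inequalities to $\eps$-uniform $H^1$ control. From \eqref{2est-bd} together with the uniform density bounds and \eqref{kamue} you can indeed extract $\eps$-free bounds on $\|\px\rhoe\|_{L^\infty_t L^2_x}$, $\|\ue-\bar u\|_{L^\infty_t L^2_x}$, and $\|\px\ue\|_{L^2_t L^2_x}$. But what the inductive scheme of Appendix \ref{app-higher} actually requires as a base is the full \eqref{eps-reg}: in particular $\|\ue-\bar u\|_{L^\infty(0,T;H^1)}$ (needed, e.g., for the bound $\ue\in L^\infty_t L^\infty_x$ used in the estimate of $I_1$ and in \eqref{int-rho}) and $\|\ue-\bar u\|_{L^2(0,T;H^2)}$ (needed so that the Gr\"onwall factors $\|\px\ue\|_{H^1}$ and $\|\px^2\ue\|_{L^2}^2$ lie in $L^1(0,T)$, and so that \eqref{reg-w} actually gives $\px\we\in L^2_t L^2_x$). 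Neither of these is delivered by \eqref{2est-bd}. This is precisely why the paper refers to the proofs of \cite[Propositions 4.6 and 4.7]{MV_sima}, which carry out an additional second-order energy estimate on the velocity beyond the BD entropy. Without reproducing that step (or citing it), your argument cannot start the induction, and the claim that ``$\eps$-uniform $H^1$-control follows from the Bresch--Desjardins inequalities'' overstates what \eqref{2est-bd} gives.
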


For the proof of proposition \ref{lem:all}, we first refer to the proof of \cite[Proposition 4.6 and 4.7]{MV_sima}, from which the constant in \eqref{eps-reg} does not depend on $\eps$ anymore.
Then, from the proof of Lemma \ref{lem:higher}, we deduce that the constant $C$ in Lemma \ref{lem:higher} is independent of $\eps$. Therefore, we have Proposition \ref{lem:all}

\subsection{Conclusion}
We have shown that for any $\eps\le\delta_1$, the system \eqref{NS-eps} has the unique smooth solution $(\rhoe,\ue)$  such that Propositions \ref{lem:rho2},  \ref{lem:rho3} and  \ref{lem:all} hold.\\
We now take $\delta_T$ as
\[
\delta_T  = \min\left( \underline\kappa(T)^{\alpha-\alpha_*} ,  \delta_1 \right),
\]
where the constants $\underline\kappa(T)$ and $\delta_1$ are as in Proposition \ref{lem:rho2}.\\
Then, since Proposition \ref{lem:rho2} implies that for all $ \eps <\delta_T$,
\[
\eps\rhoe^{\alpha_*} < \delta_T \rhoe^{\alpha_*} \le \underline\kappa(T)^{\alpha-\alpha_*} \rhoe^{\alpha_*} \le \rhoe^\alpha ,\qquad  \forall t\le T,\quad \forall x\in\bbr,
\]
it follows from the definition \eqref{mu-def} that
\beq\label{ind-mu}
\mue(\rhoe)=\mu(\rhoe), \qquad  \forall \eps <\delta_T,\quad  \forall t\le T,\quad \forall x\in\bbr.
\eeq
Recall that the approximate system \eqref{NS-eps} represents the system \eqref{NS} with $\mue$ instead of $\mu$.\\
Therefore, for any $T>0$, and any $\eps$ with $\eps<\delta_T$, $(\rhoe,\ue)$ is the unique smooth solution of \eqref{NS} with the initial datum $(\rho_0, u_0)$ such that Propositions \ref{lem:rho2},  \ref{lem:rho3} and  \ref{lem:all} hold.\\
Hence we complete the proof.

\begin{appendix}
\setcounter{equation}{0}

\section{Proof of Lemma \ref{lem:higher}} \label{app-higher}
Let $(\rho_\eps, u_\eps)$ be the global strong solution to \eqref{NS-eps} such that \eqref{eps-reg} and \eqref{eps-bdd} hold.\\
Once the desired estimates for $k=2$ are obtained, the remaining part proceeds by induction in $k$, which follows the same proof of \cite[Lemma 4.3]{CDNP}. 
Therefore, we here present the proof only when $k=2$, based on the proof of \cite[Lemma 4.2]{CDNP}.\\

First of all, since $\partial_x u_\eps \in L^2(0,T;L^\infty(\bbr))$ by \eqref{eps-reg}, using \eqref{eps-reg} and \eqref{eps-bdd}, we have
\begin{align}
\begin{aligned} \label{reg-w}
&w_\eps \in L^2(0,T;L^\infty(\bbr)), \\
&\partial_x w_\eps = -p'(\rho_\eps) \partial_x \rho_\eps +\mu_\eps'(\rho_\eps) \partial_x \rho_\eps \partial_x u_\eps +\mu_\eps(\rho_\eps) \partial_x^2 u_\eps \in L^2(0,T;L^2(\bbr)).
\end{aligned}
\end{align}
{\bf Step 1)} 
Differentiating the equation \eqref{w-eq} in space, multiplying the resulting equation by $\px\we$ and integrating by parts, we have
\begin{align*}
\begin{aligned} 
\frac{d}{dt} \int_\bbr \frac{|\px \we|^2}{2} dx &= -\int_\bbr  \frac {\mue(\rhoe)}{\rhoe} |\px^2 \we|^2 dx +\int_\bbr \left(\ue+\frac{\mue(\rhoe)}{\rhoe^2} \px\rhoe\right) \px\we \px^2\we dx\\
&~ + \int_\bbr f_1(\rhoe) |\px\we|^2 dx +  \int_\bbr f_1'(\rhoe) \px \rhoe \we \px\we dx - 2\int_\bbr f_2(\rhoe) \we |\px\we|^2 dx \\
&~ - \int_\bbr f_2'(\rhoe) \px\rhoe \we^2 \px\we dx +\int_\bbr f_3'(\rhoe) \px\rhoe \px \we dx\\
&=: -\int_\bbr  \frac {\mue(\rhoe)}{\rhoe} |\px^2 \we|^2 dx + \sum_{j=1}^6 I_j.
\end{aligned}
\end{align*}
where 
\begin{align*}
\begin{aligned} 
&f_1(\rho):= \rho\frac{p'(\rho)}{\mue(\rho)} -2p(\rho) \frac{\rho\mue'(\rho)+\mue(\rho)}{\mue(\rho)^2},\\
& f_2(\rho):=\frac{\rho\mue'(\rho)+\mue(\rho)}{\mue(\rho)^2},\\
& f_3(\rho):=\left( \rho\frac{p'(\rho)}{\mue(\rho)} -p(\rho)\frac{\rho\mue'(\rho)+\mue(\rho)}{\mue(\rho)^2}  \right) p(\rho). 
\end{aligned}
\end{align*}
Since, thanks to \eqref{eps-bdd}, $L^\infty([0,T]\times\bbr)$-norms of $\rhoe$ to some power are all bounded, there exists a positive constant $C_1=C_1(\underline \kappa_\eps(T), \overline \kappa_\eps(T))$ such that
\[
-\int_\bbr  \frac {\mue(\rhoe)}{\rhoe} |\px^2 \we|^2 dx \le - C_1 \int_\bbr  |\px^2 \we|^2 dx,
\]
and 
\[
\left\|\frac{\mue(\rhoe)}{\rhoe^2} \right\|_{L^\infty([0,T]\times\bbr)} + \sum_{j=1}^3\left(\|f_j(\rhoe) \|_{L^\infty([0,T]\times\bbr)}+ \|f_j'(\rhoe) \|_{L^\infty([0,T]\times\bbr)} \right) \le C_1.
\]
Thus, the above terms $I_j$ can be controlled as follows:
\begin{align*}
\begin{aligned} 
|I_1|&\le \|\ue\|_{L^\infty(\bbr)}\|\px\we \|_{L^2(\bbr)} \| \px^2\we\|_{L^2(\bbr)} + C_1 \|\px \rhoe\|_{L^\infty(\bbr)} \|\px\we \|_{L^2(\bbr)} \| \px^2\we\|_{L^2(\bbr)} \\
&\le \frac{C_1}{2}  \| \px^2\we\|_{L^2(\bbr)}^2 +  C  \left(\|\ue\|_{L^\infty(\bbr)}^2 + \|\px\rhoe \|_{L^2(\bbr)}^2 +  \|\px^2\rhoe \|_{L^2(\bbr)}^2  \right) \|\px\we \|_{L^2(\bbr)}^2,
\end{aligned}
\end{align*}
\begin{align*}
\begin{aligned} 
|I_2|&\le C_1 \|\px\we \|_{L^2(\bbr)}^2,\\
|I_3|&\le C_1\|\px\rhoe \|_{L^2(\bbr)}  \|\we\|_{L^\infty(\bbr)} \|\px\we \|_{L^2(\bbr)} \le C_1\|\px\rhoe \|_{L^2(\bbr)} \left(\|\we\|_{L^\infty(\bbr)}^2 + \|\px\we \|_{L^2(\bbr)}^2  \right),\\
|I_4|&\le 2 C_1 \|\we\|_{L^\infty(\bbr)}  \|\px\we \|_{L^2(\bbr)}^2,\\
|I_5|&\le C_1 \|\px\rhoe \|_{L^2(\bbr)} \|\we\|_{L^\infty(\bbr)}^2 \|\px\we \|_{L^2(\bbr)} \\
&\le C_1\|\px\rhoe \|_{L^2(\bbr)} \left(\|\we\|_{L^\infty(\bbr)}^2 +\|\we\|_{L^\infty(\bbr)}^2 \|\px\we \|_{L^2(\bbr)}^2  \right) ,\\
|I_6|&\le C_1\|\px\rhoe \|_{L^2(\bbr)}^2 + C_1 \|\px\we \|_{L^2(\bbr)}^2.
\end{aligned}
\end{align*}
Moreover, since it follows from \eqref{eps-reg} and $\bar\rho\in L^\infty(\bbr)$ that 
\beq\label{int-rho}
\px\rhoe \in L^\infty(0,T; L^2(\bbr)) \quad\mbox{and}\quad \ue  \in L^\infty(0,T; L^\infty(\bbr)),
\eeq
we have
\begin{align}
\begin{aligned}\label{dw-est} 
\frac{d}{dt}\| \px\we\|_{L^2(\bbr)}^2 +  C_1  \| \px^2\we\|_{L^2(\bbr)}^2 \le C\left( 1+\| \px^2 \rhoe\|_{L^2(\bbr)}^2 + \| \we\|_{L^\infty(\bbr)}^2   \right) \| \px\we\|_{L^2(\bbr)}^2 +F,
\end{aligned}
\end{align}
where 
\[
F=C\left( 1+\| \we\|_{L^\infty(\bbr)}^2   \right).
\]
Note from \eqref{reg-w} that $F\in L^1((0,T))$. \\
{\bf Step 2)} 
We next estimate $\| \px^2 \rhoe\|_{L^2(\bbr)}$, to control $\| \px^2 \rhoe\|_{L^2(\bbr)}^2$ in \eqref{dw-est}.\\
Differentiating the continuity equation of \eqref{NS-eps} twice in space, and multiplying the resulting equation by $\px^2\rhoe$, we have
\begin{align*}
\begin{aligned} 
\frac{d}{dt} \int_\bbr \frac{|\px^2 \rhoe|^2}{2} dx &= - \int_\bbr \px^2(\ue \px\rhoe) \px^2\rhoe dx - \int_\bbr \px^2(\rhoe\px\ue)  \px^2\rhoe dx\\
&= -\int_\bbr \ue \px \left(\frac{|\px^2\rhoe|^2}{2}\right) dx -\int_\bbr  \underbrace{\left( \px^2(\ue \px\rhoe)- \ue  \px^2 \px \rhoe \right)}_{=:J_1}  \px^2\rhoe dx \\
&\quad - \int_\bbr \rhoe\px^3 \ue  \px^2\rhoe dx -\int_\bbr  \underbrace{\left( \px^2(\rhoe\px\ue)  -\rhoe\px^3 \ue \right)}_{=:J_2}  \px^2\rhoe  dx.
\end{aligned}
\end{align*}
Using the commutator estimates \cite[Lemma 3.4]{MB} and the Sobolev embedding, we have
\begin{align*}
\begin{aligned} 
\| J_1 \|_{L^2(\bbr)} &\le C \| \px^2 \ue \|_{L^2(\bbr)} \| \px \rhoe \|_{L^\infty(\bbr)}+ C\| \px \ue \|_{L^\infty (\bbr)} \| \px^2 \rhoe \|_{L^2(\bbr)}\\
&\le C \| \px^2 \ue \|_{L^2(\bbr)} \| \px \rhoe \|_{H^1(\bbr)}+ C\| \px \ue \|_{H^1 (\bbr)} \| \px^2 \rhoe \|_{L^2(\bbr)},\\
\| J_2 \|_{L^2(\bbr)} &\le C \| \px^2 \rhoe \|_{L^2(\bbr)} \| \px \ue \|_{L^\infty(\bbr)}+ C\| \px \rhoe \|_{L^\infty (\bbr)} \| \px^2 \ue \|_{L^2(\bbr)}\\
&\le C \| \px^2 \rhoe \|_{L^2(\bbr)} \| \px \ue \|_{H^1(\bbr)}+ C\| \px \rhoe \|_{H^1 (\bbr)} \| \px^2 \ue \|_{L^2(\bbr)}.
\end{aligned}
\end{align*}
Therefore, we have
\begin{align*}
\begin{aligned} 
\frac{d}{dt} \int_\bbr \frac{|\px^2 \rhoe|^2}{2} dx &\le \frac{1}{2} \| \px \ue \|_{L^\infty(\bbr)} \| \px^2 \rhoe \|_{L^2(\bbr)}^2 +  \| \rhoe \|_{L^\infty(\bbr)} \| \px^3 \ue \|_{L^2(\bbr)}
\| \px^2 \rhoe \|_{L^2(\bbr)}\\
&\quad +C\left( \| \px^2 \ue \|_{L^2(\bbr)} \| \px \rhoe \|_{L^2(\bbr)} +\| \px \ue \|_{H^1(\bbr)} \| \px^2 \rhoe \|_{L^2(\bbr)} \right) \| \px^2 \rhoe \|_{L^2(\bbr)}.
\end{aligned}
\end{align*}
Moreover, using \eqref{eps-bdd}, \eqref{int-rho} and the Sobolev embedding, we have
\begin{align}
\begin{aligned} \label{d2rho-est}
\frac{d}{dt} \|\partial_x^2 \rhoe\|_{L^2(\bbr)}^2 &\le C\left( \| \partial_x \ue \|_{H^1(\bbr)} + \| \partial_x^2 \ue \|_{L^2(\bbr)}^2   \right) \| \partial_x^2 \rhoe \|_{L^2(\bbr)}^2 \\
&\quad +  C \|\partial_x^3 \ue \|_{L^2(\bbr)} \| \partial_x^2 \rhoe \|_{L^2(\bbr)} +C .
\end{aligned}
\end{align}

To estimate  $\|\partial_x^3 \ue \|_{L^2(\bbr)}$ in \eqref{d2rho-est}, we use the definition \eqref{def-w} of $\we$ as follows:
\beq\label{fordu}
\px \ue = g(\rhoe) \we + h(\rhoe),\quad \mbox{where }~ g(\rhoe):=\frac{1}{\mue(\rhoe)},~h(\rhoe):=\frac{p(\rhoe)}{\mue(\rhoe)}.
\eeq
Since 
\begin{align*}
\begin{aligned} 
\px^3 \ue &=g''(\rhoe) |\px\rhoe|^2 \we + g'(\rhoe) \px^2\rhoe \we+2g'(\rhoe) \px\rhoe\px \we  + g(\rhoe) \px^2 \we  \\
&\quad +  h''(\rhoe)  |\px\rhoe|^2 + h'(\rhoe) \px^2\rhoe ,
\end{aligned}
\end{align*}
we use \eqref{eps-bdd} to have
\begin{align}
\begin{aligned} \label{d3u}
 \|\px^3 \ue\|_{L^2(\bbr)} &\le C\Big( \big(\|\we \|_{L^\infty(\bbr)} +1\big)  \|\px\rhoe \|_{L^\infty(\bbr)} \|\px\rhoe\|_{L^2(\bbr)} +  \|\we \|_{L^\infty(\bbr)}  \|\px^2\rhoe\|_{L^2(\bbr)} \\
&\qquad + \|\px\rhoe \|_{L^\infty(\bbr)} \|\px\we\|_{L^2(\bbr)} + \|\px^2\we\|_{L^2(\bbr)} + \|\px^2\rhoe\|_{L^2(\bbr)}  \Big).
\end{aligned}
\end{align}
Combining this with \eqref{d2rho-est}, and using  \eqref{int-rho} and the Sobolev embedding, we have
\beq\label{d2rho}
\frac{d}{dt} \|\partial_x^2 \rhoe\|_{L^2(\bbr)}^2 \le \frac{C_1}{2} \|\px^2\we\|_{L^2(\bbr)}^2 +G_1 \|\partial_x^2 \rhoe \|_{L^2(\bbr)}^2 + G_2,
\eeq
where 
\begin{align*}
\begin{aligned} 
G_1 &:= C \left( \| \partial_x \ue \|_{H^1(\bbr)} + \| \partial_x^2 \ue \|_{L^2(\bbr)}^2 +\|\we \|_{L^\infty(\bbr)} + \|\px \we\|_{L^2(\bbr)}  +1  \right),\\
G_2 &:=  C \left(\|\we \|_{L^\infty(\bbr)}^2 + \|\px \we\|_{L^2(\bbr)}^2 +1  \right).
\end{aligned}
\end{align*}
Note that $G_1, G_2 \in L^1((0,T))$ by \eqref{eps-reg} and \eqref{reg-w}.\\
{\bf Step 3)}
Adding \eqref{dw-est} to \eqref{d2rho}, we have
\begin{align*}
\begin{aligned}
&\frac{d}{dt}\left(\|\px\we\|_{L^2(\bbr)}^2+\|\partial_x^2 \rhoe\|_{L^2(\bbr)}^2 \right) +  \frac{C_1}{2}  \| \px^2\we\|_{L^2(\bbr)}^2 \\
&\qquad\quad \le H \left(\|\px\we\|_{L^2(\bbr)}^2+\|\partial_x^2 \rhoe\|_{L^2(\bbr)}^2 \right) + F+G_2,
\end{aligned}
\end{align*}
where
\[
H:=C\left( 1+\| \px\we\|_{L^2(\bbr)}^2+ \| \we\|_{L^\infty(\bbr)}^2 + \| \partial_x \ue \|_{H^1(\bbr)} + \| \partial_x^2 \ue \|_{L^2(\bbr)}^2  \right).
\]
Since $H, F, G_2\in L^1((0,T))$, and it follows from \eqref{def-w} and \eqref{temp-ini} that
\[
\|\px\we(0)\|_{L^2(\bbr)} \le C(\underline\kappa_0, \overline\kappa_0)\left(  \|\px\rho_0\|_{L^2(\bbr)} + \|\px\rho_0\|_{L^2(\bbr)} \|\px u_0\|_{L^2(\bbr)}+\|\px^2 u_0\|_{L^2(\bbr)}   \right) ,
\]
Gr$\ddot{\mbox{o}}$nwall lemma implies that
\beq\label{f-rhow}
\|\partial_x^2 \rho_\eps \|_{L^\infty(0,T;L^2(\bbr))} + \|\px w_\eps \|_{L^\infty(0,T;L^2(\bbr))}+\|\px^2 w_\eps \|_{L^2(0,T;L^2(\bbr))} \le C,
\eeq
where the constant $C>0$ depends on $T$ and the bounds of \eqref{eps-reg}, \eqref{eps-bdd} and \eqref{temp-ini}.\\
This now together with \eqref{reg-w}, \eqref{int-rho} and \eqref{d3u} imply the bound for $\px^3 \ue$:
\[
\|\px^3 \ue \|_{L^2(0,T;L^2(\bbr))} \le C.
\]
Moreover, differentiating the both sides of \eqref{fordu} in $x$, and using \eqref{eps-bdd}, we have
\[
\|\px^2 \ue \|_{L^2(\bbr)} \le C\Big( \|\px\rhoe\|_{L^2(\bbr)} \|\we \|_{L^\infty(\bbr)}+  \|\px\we \|_{L^2(\bbr)} + \|\px\rhoe\|_{L^2(\bbr)} \Big).
\]
Therefore, we use \eqref{eps-reg}, \eqref{eps-bdd} and \eqref{f-rhow} to have
\[
\|\px^2 \ue \|_{L^\infty(0,T;L^2(\bbr))} \le C.
\]
Indeed, since  it follows from \eqref{eps-reg} and \eqref{eps-bdd} that
\[
\we = -p(\rhoe) +\mue(\rhoe)\px\ue \in L^\infty((0,T)\times\bbr)+ L^\infty(0,T;L^2(\bbr)),
\]
we use \eqref{f-rhow} to have
\begin{align*}
\begin{aligned}
|\we(x)|&\le \frac{1}{2}\int_{x-1}^{x+1} (|p(\rhoe)| +|\mue(\rhoe)\px\ue|) dy +  \frac{1}{2}\int_{x-1}^{x+1}\int_y^x |\partial_z \we| dz dy\\
&\le \|p(\rhoe) \|_{L^\infty((0,T)\times\bbr)} + \frac{1}{\sqrt{2}} \|\mue(\rhoe)\px\ue \|_{L^\infty(0,T;L^2(\bbr))} + \sqrt{2}  \|\px w_\eps \|_{L^\infty(0,T;L^2(\bbr))},
\end{aligned}
\end{align*}
which gives $\|\we \|_{L^\infty((0,T)\times\bbr)}\le C$.\\
Hence we complete the proof.


\end{appendix}

\bibliography{Kang-Vasseur2015}

\begin{thebibliography}{10}

\bibitem{BD_Paris02}
D.~Bresch and B.~Desjardins.
\newblock {Sur un mod\`ele de Saint-Venant visqueux et sa limite
  quasi-g\'eostrophique}.
\newblock {\em C. R. Math. Acad. Sci. Paris}, 335:1079--1084, 2002.

\bibitem{BD_cmp03}
D.~Bresch and B.~Desjardins.
\newblock Existence of global weak solutions for 2d viscous shallow water
  equations and convergence to the quasi-geostrophic model.
\newblock {\em Comm. Math. Phys.}, 238:211--223, 2003.

\bibitem{BD_Paris04}
D.~Bresch and B.~Desjardins.
\newblock {Some diffusive capillary models of Korteweg type}.
\newblock {\em C. R. Math. Acad. Sci. Paris, Section M\'ecanique},
  332:881--886, 2004.

\bibitem{CC}
S.~Chapman and T.G. Cowling.
\newblock The mathematical theory of non-uniform gases.
\newblock {\em Cambridge University Press, London}, 3rd ed., 1970.

\bibitem{CDNP}
P.~Constantin, T.~D. Drivas, H.~Q. Nguyen, and F.~Pasqualotto.
\newblock Compressible fluids and active potentials,.
\newblock {\em Annales de l'Institut Henri Poincar\'e. Analyse Non Lin\'eaire,
  To appear}, 2019.

\bibitem{GP}
J.-F. Gerbeau and B.~Perthame.
\newblock Derivation of viscous saint-venant system for laminar shallow water;
  numerical validation,.
\newblock {\em Discrete Contin. Dyn. Syst. Ser. B}, 1(1):89--102, 2018.

\bibitem{Haspot}
B.~Haspot.
\newblock Existence of global strong solution for the compressible
  {Navier-Stokes equations with degenerate viscosity coefficients in 1D},.
\newblock {\em Mathematische Nachrichten}, 291:2188--2203, 2018.

\bibitem{Hoff87}
D.~Hoff.
\newblock {Global existence for 1D, compressible, isentropic Navier-Stokes
  equations with large initial data}.
\newblock {\em Trans. Amer. Math. Soc}, 303:169--181, 1987.

\bibitem{Hoff98}
D.~Hoff.
\newblock {Global solutions of the equations of one-dimensional, compressible
  flow with large data and forces, and with differing end states}.
\newblock {\em Z. Angew. Math. Phys.}, 49:774--785, 1987.

\bibitem{HoSm}
D.~Hoff and J.~Smoller.
\newblock {Non-formation of vacuum states for compressible Navier-Stokes
  equations}.
\newblock {\em Comm. Math. Phys.}, 216:255--276, 2001.

\bibitem{Kang-V-NS17}
M.-J. Kang and A.~Vasseur.
\newblock {Contraction property for large perturbations of shocks of the
  barotropic {Navier-Stokes} system}.
\newblock {\em J. Eur. Math. Soc. (JEMS), To appear.
  https://arxiv.org/pdf/1712.07348.pdf}.

\bibitem{KV-unique19}
M.-J. Kang and A.~Vasseur.
\newblock {Uniqueness and stability of entropy shocks to the isentropic Euler
  system in a class of inviscid limits from a large family of Navier-Stokes
  systems}.
\newblock {\em https://arxiv.org/pdf/1902.01792.pdf}.

\bibitem{KS}
A.~V. Kazhikhov and V.~V. Shelukhin.
\newblock Unique global solution with respect to time of initial- boundary
  value problems for one-dimensional equations of a viscous gas,.
\newblock {\em Prikl. Mat. Meh.}, 41:282--291, 1977.

\bibitem{MB}
A.~Majda and A.~Bertozzi.
\newblock {\em Vorticity and Incompressible Flow}.
\newblock Cambridge Univ. Press, 2002.

\bibitem{MV_sima}
A.~Mellet and A.~Vasseur.
\newblock Existence and uniqueness of global strong solutions for
  one-dimensional compressible {Navier-Stokes equations}.
\newblock {\em SIAM J. Math. Anal.}, 39(4):1344--1365, 2007/08.

\bibitem{Serre86}
D.~Serre.
\newblock {Solutions faibles globales des \'equations de Navier-Stokes pour un
  fluide compressible}.
\newblock {\em C. R. Acad. Sci. Paris S\'er. I Math.}, 303:639--642, 1986.

\bibitem{Shel82}
V.~V. Shelukhin.
\newblock Motion with a contact discontinuity in a viscous heat conducting
  gas,.
\newblock {\em Dinamika Sploshn. Sredy}, 57:131--152, 1982.

\bibitem{Shel83}
V.~V. Shelukhin.
\newblock Evolution of a contact discontinuity in the barotropic flow of a
  viscous gas,.
\newblock {\em Prikl. Mat. Mekh.}, 47:870--872, 1983.

\bibitem{Shel86}
V.~V. Shelukhin.
\newblock Boundary value problems for equations of a barotropic viscous gas
  with nonnegative initial density.
\newblock {\em Dinamika Sploshn. Sredy}, 74:108--125, 1986.

\bibitem{Shel84}
V.V. Shelukhin.
\newblock On the structure of generalized solutions of the one-dimensional
  equations of a polytropic viscous gas,.
\newblock {\em J. Appl. Math. Mech., 48(1984), 665--672; translated from Prikl.
  Mat. Mekh. 48(1984), no. 6, 912--920}.

\bibitem{Solo}
V.~A. Solonnikov.
\newblock {The solvability of the initial-boundary value problem for the
  equations of motion of a viscous compressible fluid}.
\newblock {\em Zap. Nau$\check{c}$n. Sem. Leningrad. Otdel. Mat. Inst.
  Steklov.}, 59:128--142, 1976.

\bibitem{Vai}
V.~A. Vaigant.
\newblock Nonhomogeneous boundary value problems for equations of a viscous
  heat- conducting gas.
\newblock {\em Dinamika Sploshn. Sredy}, 97:3--21, 1990.

\end{thebibliography}
\end{document}